\newtheorem{theorem}{Theorem}[section]
\newtheorem{corollary}[theorem]{Corollary}
\newtheorem{lemma}[theorem]{Lemma}
\newtheorem{proposition}[theorem]{Proposition}
\theoremstyle{definition}
\newtheorem*{definition}{Definition}
\newtheorem{example}[theorem]{Example}
\newtheorem{remark}[theorem]{Remark}
\newcommand{\Id}{\mathit{Id}}
\newcommand{\id}{\mathit{id}}
\newcommand{\C}{\mathbb{C}}
\newcommand{\R}{\mathbb{R}}
\newcommand{\Z}{\mathbb{Z}}
\newcommand{\NN}{\mathcal{N}}
\DeclareSymbolFont{EulerScript}{U}{eus}{m}{n}
\DeclareSymbolFontAlphabet\mathscr{EulerScript}
\begin{document}

\title{$L^2$-Burau maps and~$L^2$-Alexander torsions}

\author{Fathi Ben Aribi}
\address{Universit\'e de Gen\`eve, Section de math\'ematiques, 2-4 rue du Li\`evre, 1211 Gen\`eve 4, Switzerland}
\email{fathi.benaribi@unige.ch}

\author{Anthony Conway}
\email{anthony.conway@unige.ch}

\subjclass[2000]{57M25; 57M27} 
\keywords{Braid group, Burau representation, $L^2$-Alexander torsion.}
\begin{abstract}
It is well known that the Burau representation of the braid group can be used to recover the Alexander polynomial of the closure of a braid. We define~$L^2$-Burau maps and use them to compute some~$L^2$-Alexander torsions of links. As an application, we prove that the~$L^2$-Burau maps distinguish more braids than the Burau representation.
\end{abstract}
\maketitle

\section{Introduction}
\label{sec:intro}
The $L^2$-Alexander torsions of $3$-manifolds were introduced by Dubois, Friedl and L\"uck in \cite{DFL} as generalizations of both the classical Reidemeister torsions and the $L^2$-Alexander invariant of knots of Li-Zhang \cite{LZ06a}. These $L^2$-Alexander torsions are topological invariants that are functions on the positive real numbers. On the one hand, the~$L^2$-Alexander torsions share many features with the Alexander polynomial: for instance they are symmetric \cite{DFLdual} and provide information on the Thurston norm of the considered manifold \cite{DFL, FL}. In the classical case, the Alexander polynomial is related to the (reduced) Burau representation of the braid group \cite{Burau}. It is thus natural to ask whether a similar relation exists in the~$L^2$ case. On the other hand, the~$L^2$-Alexander torsions are, in a sense, stronger invariants than their classical counterparts: not only do they contain the simplicial volume of a 3-manifold \cite{Luc02}, they also detect an infinite number of knots \cite{BA13, BAstronger} whereas the Alexander polynomial does not. Therefore, if an~$L^2$-analogue of the Burau representation were to exist, one may expect it to distinguish more braids than the classical Burau representation.

In the present article, we introduce~$L^2$-Burau maps and reduced~$L^2$-Burau maps (see Section \ref{sec:results} for the precise definitions) and study their properties. Although these maps do not provide (anti-)representations of the braid group, they remain computable by recursive formulas and Fox calculus (see Lemma \ref{lem:cocycle} and Proposition \ref{prop:BurauFox}). Moreover, we show that one can extract the classical Burau representation from any $L^2$-Burau map (Proposition \ref{prop:L2classical}). Furthermore, we relate particular $L^2$-Burau maps of braids to~$L^2$-Alexander torsions of the closures of these braids (Theorem \ref{thm:main}). As an application, we provide an example of two braids indistinguishable under the Burau representation but which can be told apart by the~$L^2$ version (Corollary \ref{cor:faithful}). Our main tools rely on well-known results from the theory of $L^2$-invariants together with the homological interpretation of the Burau representation.

\medbreak
The paper is organized as follows. First, in Section \ref{sec:L2preliminaries}, we recall some theory of~$L^2$-invariants. Then, in Section \ref{sec:topPrelim}, we fix notations regarding the braid group and recall the definition of the $L^2$-Alexander torsion together with its relation to Fox calculus. Finally, in Section \ref{sec:results}, we introduce the~$L^2$-Burau maps (Subsections \ref{sub:Burau} and \ref{sub:reduced}) and prove the main results (Subsection \ref{sub:thm}).

\section*{Acknowledgments}
The first author was supported by the \textit{Swiss National Science Foundation}, subsidy \\
$200021\_ 162431$, at the Universit\'e de Gen\`eve. The second author was supported by the NCCR SwissMAP, funded by the \textit{Swiss National Science Foundation}. We thank David Cimasoni for helpful conversations.

\section{Hilbert~$\mathcal{N}(G)$-modules and the~$L^2$-torsion.}
\label{sec:L2preliminaries}

In this section we briefly review some theory of~$L^2$-invariants: we begin with the von Neumann dimension of a finitely generated Hilbert~$\mathcal{N}(G)$-module (Subsection \ref{sub:dimension}) before moving on to the Fuglede-Kadison determinant (Subsection \ref{sub:Fuglede}) and discussing~$L^2$-homology (Subsection \ref{sub:L2chain}). We mostly follow \cite{Luc02} and \cite{DFL}.

\subsection{The von Neumann dimension}
\label{sub:dimension}

Given a countable discrete group~$G$, the completion of the algebra~$\C[G]$ endowed with the scalar product
$ \left \langle \sum_{g \in G} \lambda_g g , \sum_{g \in G} \mu_g g \right \rangle:= \sum_{g \in G} \lambda_g \overline{\mu_g}$
is the Hilbert space ~$$ \ell^2(G):= \left \{ \sum_{g \in G} \lambda_g g \ | \  \lambda_g \in \mathbb{C} , \sum_{g \in G} | \lambda_g |^2 < \infty \right \}$$ of square-summable complex functions on~$G$. We denoted by~$B(\ell^2(G))$ the algebra of  operators on~$\ell^2(G)$ that are bounded with respect to the operator norm. 

Given~$h \in G$, we define the corresponding \textit{left-} and \textit{right-multiplication operators}~$L_{h}$ and~$R_{h}$  in~$B(\ell^2(G))$ as extensions of the automorphisms~$(g \mapsto hg)$ and~$(g \mapsto gh)$ of~$G$.
One can extend the operators~$R_{h}$~$\mathbb{C}$-linearly to operators~$R_w\colon \ell^2(G) \to \ell^2(G)$ for any~$w \in \C[G]$. Moreover, if~$\ell^2(G)^n$ is endowed with its usual Hilbert space structure and~$A = \left ( a_{i,j} \right ) \in M_{p,q}(\C[G])$ is a~$\C[G]$-valued~$p\times q$ matrix, then the right multiplication
$$R_A:= \left (R_{a_{i,j}}\right )_{1 \leqslant i \leqslant p, 1 \leqslant j \leqslant q}$$ provides a bounded operator~$\ell^2(G)^{q} \rightarrow \ell^2(G)^{ p}$. Note that we shall consider elements of~$\ell^2(G)^n$ as \textit{column vectors} and suppose that matrices with coefficients in~$B(\ell^2(G))$ act on the \textit{left} (even though the coefficients may be \textit{right}-multiplication operators).

The \textit{von Neumann algebra}~$\mathcal{N}(G)$ of the group~$G$ is the sub-algebra of~$B(\ell^2(G))$ made up of~$G$-equivariant operators (i.e. operators that commute with all left multiplications~$L_h$). A \textit{finitely generated Hilbert~$\mathcal{N}(G)$-module} consists in a Hilbert space~$V$ together with a left~$G$-action by isometries such that there exists a positive integer~$m$ and an embedding~$\varphi$ of~$V$ into~$\ell^2(G)^m$. A \textit{morphism of finitely generated Hilbert~$\mathcal{N}(G)$-modules}~$f \colon U \rightarrow V$ is a linear bounded map which is~$G$-equivariant.

Denoting by~$e$ the neutral element of~$G$, the von Neuman algebra of~$G$ is endowed with the \textit{trace}~$\mathrm{tr}_{\mathcal{N}(G)} \colon \mathcal{N}(G)  \rightarrow \mathbb{C}, \phi \mapsto  \left \langle \phi (e) , e \right \rangle$ which extends to~$\mathrm{tr}_{\mathcal{N}(G)} \colon M_{n,n}(\mathcal{N}(G)) \rightarrow \C$ by summing up the traces of the diagonal elements.

\begin{definition}
The \textit{von Neumann dimension} of a finitely generated Hilbert~$\mathcal{N}(G)$-module~$V$ is defined as
~$$\dim_{\mathcal{N}(G)}(V) := \mathrm{tr}_{\mathcal{N}(G)}(\mathrm{pr}_{\varphi(V)}) \in \R_{\geqslant 0},$$
where~$ \mathrm{pr}_{\varphi(V)} \colon \ell^2(G)^m \to \ell^2(G)^m~$ is the orthogonal projection onto~$\varphi(V)$.
\end{definition}

 The von Neumann dimension does not depend on the embedding of~$V$ into the finite direct sum of copies of~$\ell^2(G)$.

\subsection{The Fuglede-Kadison determinant}
\label{sub:Fuglede}

The \textit{spectral density}~$F(f) \colon \mathbb{R}_{\geq 0} \to \mathbb{R}_{\geq 0}$ of a morphism~$f \colon U \to V$ of finitely generated Hilbert~$\mathcal{N}(G)$-modules maps~$\lambda \in \mathbb{R}_{\geqslant 0}$ to 
$$ F(f)(\lambda):= \sup \{
 \dim_{\mathcal{N}(G)} (L) | L \in \mathcal{L}(f,\lambda) \},$$
where~$\mathcal{L}(f,\lambda)$ is the set of finitely generated Hilbert~$\mathcal{N}(G)$-submodules of~$U$ on which the restriction of~$f$ has a norm smaller or equal to~$\lambda$. Since~$F(f)(\lambda)$ is monotonous and right-continuous, it defines a measure~$dF(f)$ on the Borel set of~$\R_{\geqslant 0}$ solely determined by the equation~$dF(f)(]a,b]) = F(f)(b)-F(f)(a)$ for all~$a<b$.

\begin{definition} \label{def detFK}
The \textit{Fuglede-Kadison determinant of~$f$} is defined by
\begin{equation*}\label{detFK}
{\det}_{\mathcal{N}(G)}(f)=
\begin{cases}
 \exp \left ( \int_{0^+}^\infty \ln(\lambda) \, dF(f)(\lambda) \right )                       & \mbox{if } \int_{0^+}^\infty \ln(\lambda) \, dF(f)(\lambda) > -\infty, \\ 
0  & \mbox{otherwise }.
 \end{cases} 
\end{equation*}
Moreover, when~$\int_{0^+}^\infty \ln(\lambda) \, dF(f)(\lambda) > -\infty$, one says that \emph{$f$ is of determinant class}. 
\end{definition}

If~$U$ and~$V$ have the same von Neumann dimension, we define the \textit{regular Fuglede-Kadison determinant of~$f$} denote by~${\det}^r_{\mathcal{N}(G)}(f)$ as~${\det}_{\mathcal{N}(G)}(f)$ when~$f$ is injective, and zero otherwise. For later use, let us mention the following property of the determinant (see \cite{Luc02} for the proof).

\begin{proposition} \label{prop operations det}
Let~$G$ be a countable discrete group. If~$g \in G$ is of infinite order, then for all~$t \in \C$ the operator~$ \Id - t R_g$ is injective and~$
\mathrm{det}_{\mathcal{N}(G)} ( \Id - t R_g) = \max ( 1 , |t|)$.
\end{proposition}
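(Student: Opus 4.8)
The plan is to reduce the statement to the case $G=\Z$ and then to carry out an explicit Fourier-analytic computation. Since $g$ has infinite order, the subgroup $H:=\langle g\rangle$ is infinite cyclic, and the orbits of the right-multiplication $R_g$ on the set $G$ are the left cosets $hH$. Hence $\ell^2(G)$ decomposes as an orthogonal direct sum $\bigoplus_{hH\in G/H}\ell^2(hH)$ of $R_g$-invariant subspaces, each isometrically isomorphic, compatibly with $R_g$, to $\ell^2(\Z)$ equipped with the bilateral shift. In particular $\Id-tR_g$ is injective on $\ell^2(G)$ if and only if it is on $\ell^2(\Z)$, and ${\det}_{\mathcal{N}(G)}(\Id-tR_g)={\det}_{\mathcal{N}(\Z)}(\Id-tR_g)$, because the Fuglede--Kadison determinant is compatible with induction along injective group homomorphisms (see \cite{Luc02}). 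One can also see the equality of determinants by hand: $(\Id-tR_g)^*(\Id-tR_g)=R_w$ with $w=(1+|t|^2)e-t\,g-\overline{t}\,g^{-1}\in\C[G]$, and since $g$ has infinite order every power $w^k$ has the same coefficient at $e$ in $\C[G]$ as the constant term of $\big((1+|t|^2)-tz-\overline{t}z^{-1}\big)^k$ in $\C[\Z]$; thus $\mathrm{tr}_{\mathcal{N}(G)}$ and $\mathrm{tr}_{\mathcal{N}(\Z)}$ record the same spectral measure and the determinants coincide.

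So I would then assume $G=\Z=\langle g\rangle$. The Fourier transform identifies $\ell^2(\Z)$ with $L^2(S^1,\mu)$, where $\mu$ is the normalized Haar measure, and sends $R_g$ to multiplication by the coordinate function $z$; under this identification $\Id-tR_g$ becomes the multiplication operator $M_\phi$ with $\phi(z)=1-tz$. As $\phi$ vanishes at most at the point $z=1/t$ — and only when $|t|=1$ — it is nonzero $\mu$-almost everywhere, so $M_\phi$, hence $\Id-tR_g$, is injective; by the first step this settles injectivity for every $G$.

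For the determinant I would use that the spectral density of a multiplication operator $M_\phi$ with $\phi\in L^\infty(S^1)$ is $F(M_\phi)(\lambda)=\mu(\{z\in S^1:|\phi(z)|\leqslant\lambda\})$, so that $dF(M_\phi)$ is the pushforward of $\mu$ along $|\phi|$ and $\int_{0^+}^\infty\ln(\lambda)\,dF(M_\phi)(\lambda)=\int_{S^1}\ln|\phi|\,d\mu$ whenever the right-hand side is finite. Specializing to $\phi(z)=1-tz$ and using Jensen's formula yields
\begin{equation*}
\int_{S^1}\ln|1-tz|\,d\mu(z)=\ln\max(1,|t|),
\end{equation*}
the integrand having at worst an integrable logarithmic singularity in the critical case $|t|=1$. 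It then follows that $\Id-tR_g$ is of determinant class with ${\det}_{\mathcal{N}(\Z)}(\Id-tR_g)=\max(1,|t|)$, so by the first step ${\det}_{\mathcal{N}(G)}(\Id-tR_g)=\max(1,|t|)$ for arbitrary $G$.

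The step I expect to be the main obstacle is the honest justification of the formula $\int_{0^+}^\infty\ln(\lambda)\,dF(M_\phi)(\lambda)=\int_{S^1}\ln|\phi|\,d\mu$ — that is, the measure-theoretic identification of $dF(M_\phi)$ with the pushforward of Haar measure — together with the boundary case $|t|=1$, where $1-tz$ genuinely vanishes at a point of $S^1$: there one must confirm that $\ln|1-tz|$ stays integrable (it does, the zero being simple), so that the operator remains of determinant class and the value $\max(1,|t|)=1$ is indeed attained. Everything else is routine, and in fact all of these computations are carried out in \cite{Luc02}, which one may alternatively just invoke.
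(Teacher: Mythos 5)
Your proof is correct and is essentially the standard argument for this fact: the paper itself gives no proof but delegates to \cite{Luc02}, where exactly this reduction to $G=\Z$ (via induction along the inclusion of $\langle g\rangle$) followed by the identification of $\ell^2(\Z)$ with $L^2(S^1)$ and Jensen's formula is carried out. The two points you flag as delicate — the identification of $dF(M_\phi)$ with the pushforward of Haar measure, and the integrability of $\ln|1-tz|$ when $|t|=1$ — are handled there precisely as you describe, so there is nothing to add.
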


\subsection{$L^2$-torsion of a finite Hilbert~$\NN(G)$-chain complex}
\label{sub:L2chain}

A \textit{finite Hilbert~$\NN(G)$-chain complex}~$C_*$ is a sequence of morphisms of finitely generated Hilbert~$\NN(G)$-modules
$$C_* = \left(0 \to C_n \overset{\partial_n}{\longrightarrow} C_{n-1} 
\overset{\partial_{n-1}}{\longrightarrow} \ldots
\overset{\partial_2}{\longrightarrow} C_1 \overset{\partial_1}{\longrightarrow} C_0 \to 0\right)$$
such that~$\partial_p \circ \partial_{p+1} =0$ for all~$p$.
The \textit{$p$-th~$L^2$-homology} of~$C_*$ is the finitely generated Hilbert~$\NN(G)$-module
$$H_p^{(2)}(C_*) := \textrm{Ker}(\partial_p) / \overline{\textrm{Im}(\partial_{p+1})}.$$
The \textit{$p$-th~$L^2$-Betti number of~$C_*$} is defined as~$b_p^{(2)}(C_*) := \dim_{\NN(G)}(H_p^{(2)}(C_*))$. A finite Hilbert~$\NN(G)$-chain complex~$C_*$ is \textit{weakly acyclic} if its~$L^2$-homology is trivial  (i.e. if all its~$L^2$-Betti numbers vanish) and of \textit{determinant class} if all the operators~$\partial_p$ are of determinant class. 

The following result is a reformulation of \cite[Theorem 1.21 and Theorem 3.35 (1)]{Luc02}:

\begin{proposition}\label{prop:exact}
Let~$0 \to 
C_*
\overset{\iota_*}{\longrightarrow} 
D_*
 \overset{\rho_*}{\longrightarrow} 
E_*
  \to 0$ be an exact sequence of finite Hilbert~$\NN(G)$-chain complexes. If two of the finite Hilbert~$\NN(G)$-chain complexes~$C_*, D_*, E_*$ are weakly acyclic (respectively weakly acyclic and of determinant class), then the third is as well.
\end{proposition}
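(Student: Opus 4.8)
The plan is to deduce the statement from the long weakly exact $L^2$-homology sequence associated to the short exact sequence of chain complexes, together with the additivity of the von Neumann dimension and the multiplicativity of the Fuglede--Kadison determinant; concretely, this is a repackaging of \cite[Theorem~1.21]{Luc02} for the weakly acyclic assertion and of \cite[Theorem~3.35 (1)]{Luc02} for the determinant-class refinement. The key tool is the long sequence of finitely generated Hilbert~$\mathcal{N}(G)$-modules
$$\cdots \longrightarrow H_{p+1}^{(2)}(E_*) \overset{\delta}{\longrightarrow} H_p^{(2)}(C_*) \longrightarrow H_p^{(2)}(D_*) \longrightarrow H_p^{(2)}(E_*) \overset{\delta}{\longrightarrow} H_{p-1}^{(2)}(C_*) \longrightarrow \cdots,$$
which is weakly exact (the closure of each image coincides with the next kernel) and which has only finitely many nonzero terms since the three complexes are finite.

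For the weakly acyclic assertion, assume that two of $C_*, D_*, E_*$ are weakly acyclic. Then in each degree~$p$ the $L^2$-homology of the remaining complex is caught in the sequence above between two vanishing modules: $0 \to H_p^{(2)}(D_*) \to 0$ is weakly exact when $C_*$ and $E_*$ are weakly acyclic, $0 \to H_p^{(2)}(E_*) \to 0$ is weakly exact when $C_*$ and $D_*$ are, and $0 \to H_p^{(2)}(C_*) \to 0$ is weakly exact when $D_*$ and $E_*$ are. Weak exactness and the additivity of $\dim_{\mathcal{N}(G)}$ then force the von Neumann dimension of the relevant $L^2$-homology module to vanish in every degree; being a finitely generated Hilbert~$\mathcal{N}(G)$-module, it is therefore the zero module, so the third complex is weakly acyclic.

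For the determinant-class refinement I would work one degree at a time. Each short exact sequence $0 \to C_p \overset{\iota_p}{\longrightarrow} D_p \overset{\rho_p}{\longrightarrow} E_p \to 0$ of Hilbert~$\mathcal{N}(G)$-modules splits orthogonally, so $D_p \cong C_p \oplus E_p$, and under this identification the differential of $D_*$ becomes block upper-triangular,
$$\partial_p^{D} = \begin{pmatrix} \partial_p^{C} & \gamma_p \\ 0 & \partial_p^{E} \end{pmatrix},$$
for some morphisms $\gamma_p \colon E_p \to C_{p-1}$. Using the multiplicativity of the Fuglede--Kadison determinant under composition --- the off-diagonal block being absorbed into an isomorphism of Hilbert~$\mathcal{N}(G)$-modules, which is automatically of determinant class --- together with its additivity over direct sums, one sees that $\partial_p^{D}$ is of determinant class if and only if both $\partial_p^{C}$ and $\partial_p^{E}$ are; this single equivalence covers all three cases at once, and combined with the weakly acyclic statement it yields the parenthetical clause. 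The step I expect to be the real obstacle is precisely this determinant bookkeeping: it relies on the compatibility of the Fuglede--Kadison determinant with composition and direct sums (\cite[Theorem~3.14]{Luc02}, the substance behind \cite[Theorem~3.35 (1)]{Luc02}), and in particular on the fact that the connecting terms $\gamma_p$ --- the failure of the short exact sequence to split at the level of chain complexes --- cannot destroy the determinant-class property. Unlike the formal dimension count behind weak acyclicity, this genuinely requires the spectral-measure arguments of \cite{Luc02}.
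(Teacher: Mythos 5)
The paper does not actually prove this proposition: it is stated as a reformulation of \cite[Theorem 1.21 and Theorem 3.35 (1)]{Luc02}, so you are supplying an argument the authors deliberately outsourced. Your first half is correct and is exactly the content of the first citation: the long weakly exact $L^2$-homology sequence of \cite[Theorem 1.21]{Luc02} traps each $L^2$-homology module of the third complex between two zero modules, and a finitely generated Hilbert $\mathcal{N}(G)$-module of vanishing von Neumann dimension is zero. No complaint there.

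The determinant-class half, however, has a genuine gap, and you have correctly located it yourself before waving it away. The degree-wise claim that $\partial_p^{D}$ is of determinant class if and only if both $\partial_p^{C}$ and $\partial_p^{E}$ are is \emph{false} without the weak acyclicity hypothesis, so no amount of bookkeeping can establish it in the form you state. The multiplicativity $\det_{\mathcal{N}(G)}(g\circ f)=\det_{\mathcal{N}(G)}(g)\det_{\mathcal{N}(G)}(f)$ and the block-triangular formula of \cite[Theorem 3.14]{Luc02} require the first factor to have dense image and the second to be injective, conditions which the differentials of a chain complex essentially never satisfy (they compose to zero); likewise, absorbing the off-diagonal block $\gamma_p$ into an isomorphism requires it to factor through one of the diagonal blocks, which it need not. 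Concretely, take $G=\mathbb{Z}$, so that $\mathcal{N}(G)\cong L^\infty(S^1)$ and the Fuglede--Kadison determinant of a matrix of multiplication operators is $\exp\int\sum\ln s_i$ over the nonzero pointwise singular values $s_i$. Let $S\subset S^1$ be an arc, $a=\mathbf{1}_{S^1\setminus S}$, let $c$ equal $1$ off $S$ and $\exp(-1/|\theta-\theta_0|)$ on $S$, and consider the two-term complexes with differentials $a$, $\left(\begin{smallmatrix} a & 1\\ 0 & c\end{smallmatrix}\right)$ and $c$, forming a short exact sequence in the obvious way. Then $C_*$ and $D_*$ are of determinant class while $E_*$ is not: on $S$ the off-diagonal entry hides the non-integrability of $\ln c$ inside the single nonzero singular value $\sqrt{1+|c|^2}\geq 1$. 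This triple is of course not weakly acyclic, so it does not contradict the proposition, but it shows that any proof of the parenthetical clause must use weak acyclicity in an essential way, and yours does not. The actual route is \cite[Theorem 3.35 (1)]{Luc02}: weak acyclicity makes the long homology sequence trivial (hence weakly acyclic and of determinant class), and the two-out-of-three property for determinant class is then extracted from the additivity of the torsion, not from a degree-wise identity on the differentials.
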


\begin{definition}
The \textit{$L^2$-torsion} of a finite Hilbert~$\NN(G)$-chain complex~$C_*$ is defined as
$$T^{(2)}(C_*) := \prod_{i=1}^n \det {}_{\NN(G)}(\partial_i)^{(-1)^i} \in \R_{>0}$$
when $C_*$ is  weakly acyclic and of determinant class, 
and as~$T^{(2)}(C_*):=0$ otherwise.
\end{definition}

Let~$C_*=(0 \to \ell^2(G)^k \overset{\partial_2}{\longrightarrow}  \ell^2(G)^{k+l} \overset{\partial_1} {\longrightarrow}  \ell^2(G)^l \to 0)$ be a finite Hilbert~$\NN(G)$-chain complex and let~$J \subset \{1, \ldots,k+l\}$ be a subset of size~$l$. Viewing~$\partial_1,\partial_2$ as matrices with coefficients in~$B(\ell^2(G))$, denote  by~$\partial_1(J)$ the operator composed of the columns of~$\partial_1$ indexed by~$J$, and by~$\partial_2(J)$ the operator obtained from~$\partial_2$ by deleting the rows indexed by~$J$. We refer to \cite[Lemma 3.1]{DFL} for the proof of the following proposition.

\begin{proposition}
\label{prop:tau chaine}
Assume that~$\partial_1(J)$ is injective and of determinant class.
Then~$$T^{(2)}(C_*) = \dfrac{\det_{\NN(G)}^r(\partial_2(J))}{\det_{\NN(G)}^r(\partial_1(J))}.$$ In particular,~$\partial_2(J)$ is injective and of determinant class if and only if~$C_*$ is weakly acyclic and of determinant class, and in this case one can write
$$T^{(2)}(C_*)  = \dfrac{\det_{\NN(G)}(\partial_2)}{\det_{\NN(G)}(\partial_1)} = \dfrac{\det_{\NN(G)}(\partial_2(J))}{\det_{\NN(G)}(\partial_1(J))}.$$
\end{proposition}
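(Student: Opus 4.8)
The plan is to realise $C_*$ as an extension $0\to D_*\to C_*\to E_*\to 0$ in which $D_*$ is a weakly acyclic determinant-class complex built out of $\partial_1(J)$ alone and $E_*$ is the complex whose single nonzero boundary is $\partial_2(J)$, and then to recover $T^{(2)}(C_*)$ from the torsions of the two ends.

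Concretely, I would write $J^c:=\{1,\dots,k+l\}\setminus J$, so $|J^c|=k$, and split the middle module orthogonally as $\ell^2(G)^{k+l}=V_J\oplus V_{J^c}$, where $V_J\cong\ell^2(G)^{l}$ is spanned by the coordinates indexed by $J$ and $V_{J^c}\cong\ell^2(G)^{k}$ by the remaining ones. Let $D_*$ be the subcomplex with $D_2=0$, $D_1=V_J$ and $D_0=C_0$; by the very definition of $\partial_1(J)$ its only boundary is $\partial_1(J)\colon\ell^2(G)^{l}\to\ell^2(G)^{l}$. The quotient $E_*:=C_*/D_*$ then has $E_2=\ell^2(G)^{k}$, $E_1=C_1/V_J\cong V_{J^c}\cong\ell^2(G)^{k}$ and $E_0=0$, and its only boundary is the composite of $\partial_2$ with the orthogonal projection onto $V_{J^c}$, which is precisely $\partial_2(J)$. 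Degreewise exactness of $0\to D_*\to C_*\to E_*\to 0$ amounts, in degree $1$, to the orthogonal splitting $V_J\oplus V_{J^c}$.

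Next I would verify that $D_*$ is weakly acyclic and of determinant class under the standing hypothesis: one has $H_1^{(2)}(D_*)=\textrm{Ker}(\partial_1(J))=0$ by injectivity, $H_0^{(2)}(D_*)=\ell^2(G)^{l}/\overline{\textrm{Im}(\partial_1(J))}$ has vanishing von Neumann dimension because $\partial_1(J)$ is an injective morphism between modules of equal von Neumann dimension, hence is itself trivial, and $\partial_1(J)$ is of determinant class by assumption. Proposition~\ref{prop:exact} then shows that $C_*$ is weakly acyclic and of determinant class if and only if $E_*$ is; and $E_*$ has this property if and only if $\partial_2(J)$ is injective and of determinant class, since injectivity of $\partial_2(J)$ is exactly the vanishing of $H_2^{(2)}(E_*)$ and, between modules of equal von Neumann dimension, automatically forces $\overline{\textrm{Im}(\partial_2(J))}=\ell^2(G)^{k}$ and hence $H_1^{(2)}(E_*)=0$, while the only other boundary of $E_*$ is the zero map and is trivially of determinant class. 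Chaining the two equivalences gives the ``in particular'' assertion.

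It remains to identify the torsions in the weakly acyclic determinant-class case. The defining formula for $T^{(2)}$ gives $T^{(2)}(D_*)={\det}_{\NN(G)}(\partial_1(J))^{-1}$, $T^{(2)}(E_*)={\det}_{\NN(G)}(\partial_2(J))$ (the Fuglede--Kadison determinant of the zero map $E_1\to 0$ being $1$), and $T^{(2)}(C_*)={\det}_{\NN(G)}(\partial_2)/{\det}_{\NN(G)}(\partial_1)$; multiplicativity of $L^2$-torsion along $0\to D_*\to C_*\to E_*\to 0$ then yields $T^{(2)}(C_*)=T^{(2)}(D_*)\,T^{(2)}(E_*)={\det}_{\NN(G)}(\partial_2(J))/{\det}_{\NN(G)}(\partial_1(J))$, which, since $\partial_1(J)$ and $\partial_2(J)$ are injective so that their regular determinants equal their Fuglede--Kadison determinants, is both displayed formulas. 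When $C_*$ is not weakly acyclic and of determinant class, $T^{(2)}(C_*)=0$; by the equivalence above $\partial_2(J)$ is then not injective or not of determinant class, so ${\det}^{r}_{\NN(G)}(\partial_2(J))=0$ while ${\det}^{r}_{\NN(G)}(\partial_1(J))={\det}_{\NN(G)}(\partial_1(J))>0$, and the first displayed formula again reads $0=0$. The one delicate step --- the main obstacle --- is that Proposition~\ref{prop:exact} records only the two-out-of-three statement, whereas the torsion identity requires the full multiplicative sum formula of \cite[Theorem~3.35]{Luc02} (whose homological correction term is trivial here because all three $L^2$-homologies vanish); everything else is bookkeeping with the orthogonal decomposition and with the additivity of von Neumann dimension.
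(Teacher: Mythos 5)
Your argument is correct. Note that the paper itself does not prove this proposition --- it defers entirely to \cite[Lemma~3.1]{DFL} --- so what you have produced is a self-contained reconstruction, and it is the natural one: split off the subcomplex $D_*=(0\to 0\to V_J\to C_0\to 0)$ whose only differential is $\partial_1(J)$, identify the quotient with $(0\to \ell^2(G)^k\xrightarrow{\partial_2(J)}\ell^2(G)^k\to 0\to 0)$, and combine the two-out-of-three principle with multiplicativity of the torsion. All the individual steps check out: the identification of the quotient differential with $\partial_2(J)$ is exactly the row-deletion in the paper's definition; the vanishing of $H_0^{(2)}(D_*)$ and $H_1^{(2)}(E_*)$ from injectivity uses additivity and faithfulness of the von Neumann dimension correctly; and your treatment of the degenerate case (non-injective or non-determinant-class $\partial_2(J)$ forcing both sides of the first display to vanish, since the regular determinant is then zero by definition while the denominator stays positive) is exactly what is needed for the formula to hold unconditionally. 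You are also right to flag the one genuine external input: Proposition~\ref{prop:exact} as recorded in the paper only gives the two-out-of-three statement, and the torsion identity $T^{(2)}(C_*)=T^{(2)}(D_*)\,T^{(2)}(E_*)$ requires the full sum formula of \cite[Theorem~3.35~(1)]{Luc02}, whose homology correction term vanishes here because all three complexes are weakly acyclic. With that citation made explicit, the proof is complete and in the same spirit as the one in \cite{DFL}.
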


\section{Topological preliminaries}
\label{sec:topPrelim}

In this section, we start by reviewing the braid group (Subsection \ref{sub:braids}), before discussing the~$L^2$-homology of CW-complexes (Subsection \ref{sub:L2homology}) and the~$L^2$-Alexander torsions together with their relation to Fox calculus (Subsection \ref{sub:L2TorsionFox}).

\begin{figure}[h]
\labellist\small\hair 2.5pt
\pinlabel {$\beta_1$} at 1 760
\pinlabel {$\beta_2$} at 1 615
\pinlabel {$\beta_1 \beta_2$} at 390 685
\endlabellist
\centering
\includegraphics[width=0.4\textwidth,scale=1.2]{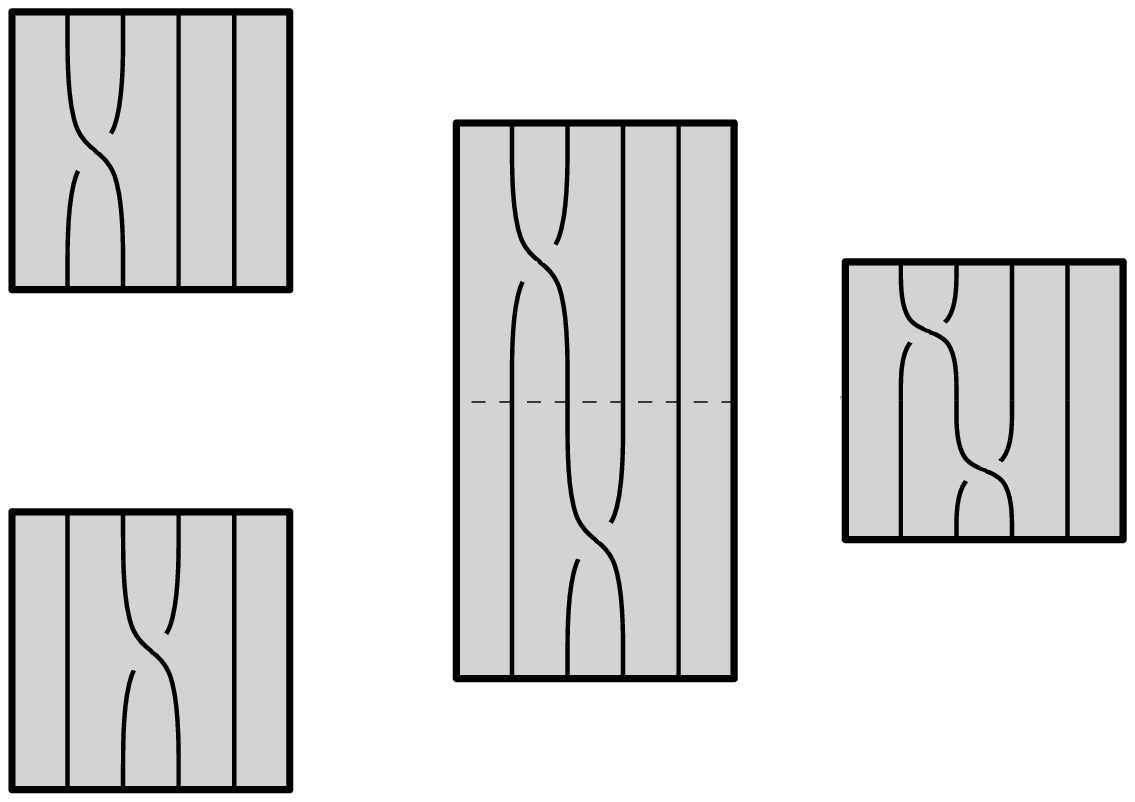}
\caption{Two braids $\beta_1, \beta_2$ and their composition, the~braid~$\beta_1\beta_2$.}
\label{fig:CompositionTwistedOriente}
\end{figure}

\subsection{The braid group}
\label{sub:braids}
Following Birman \citep{Birmanbook}, we start by recalling some well-known properties of the braid group $B_n$ including its \textit{right} action on the free group $F_n$. In contrast with Birman however, the composition of maps will be written in the usual way (from right to left) which leads to an \textit{anti}-representation $B_n \rightarrow Aut(F_n)$. 
\medbreak
Let~$D^2$ be the closed unit disk in~$\mathbb{R}^2.$ Fix a set of~$n \geq 1$ punctures~$p_1,p_2,\dots,p_n$ in the interior of~$D^2$. We shall assume that each~$p_i$ lies in~$(-1,1)=Int(D^2) \cap \mathbb{R}$ and~$p_1<p_2<\dots<p_n.$ A \textit{braid with~$n$ strands} is an~$n$-component piecewise linear one-dimensional submanifold~$\beta$ of the cylinder~$D^2 \times [0,1]$ whose boundary is~$\bigsqcup_{i=1}^n p_i \times \lbrace 0,1 \rbrace$, and where the projection to~$[0,1]$ maps each component of~$\beta$ homeomorphically onto~$[0,1]$. Two braids~$\beta_1$ and~$\beta_2$ are \textit{isotopic} if there is a self-homeomorphism~$H$ of~$D^2 \times [0,1]$ which keeps~$D^2 \times \lbrace 0,1 \rbrace \cup \partial D^2 \times [0,1]$ fixed and such that~$H(\beta_1)=\beta_2$. The \textit{braid group}~$B_n$ consists of the set of isotopy classes of braids.  The identity element is given by the \textit{trivial braid}~$\xi_n:=\lbrace p_1,p_2,\dots, p_n \rbrace \times [0,1]$ while the composition~$\beta_1 \beta_2$ consists in gluing~$\beta_1$ on top of~$\beta_2$ and shrinking the result by a factor~$2$ (see Figure \ref{fig:CompositionTwistedOriente}). 

The braid group~$B_n$ can also be seen as the set of  isotopy classes of orientation-preserving homeomorphisms of~$D_n :=D^2 \setminus \lbrace p_1,\dots, p_n \rbrace$ fixing the boundary pointwise. Either way,~$B_n$ admits a presentation with~$n-1$ generators~$\sigma_1,\sigma_2, \dots, \sigma_{n-1}$ subject to the relations~$\sigma_i \sigma_{i+1} \sigma_i=\sigma_{i+1} \sigma_i \sigma_{i+1}$ for each~$i$, and~$\sigma_i \sigma_j = \sigma_j \sigma_i$ if~$|i-j|>2$. Topologically, the generator~$\sigma_i$ is the braid whose~$i$-th component passes over the~$i+1$-th component.

\begin{figure}[h]
\labellist\small\hair 2.5pt
\pinlabel {$x_1$} at 35 745
\pinlabel {$x_2$} at 80 745
\pinlabel {$x_3$} at 125 745
\pinlabel {$z$} at 80 839
\endlabellist
\centering
\includegraphics[width=0.4\textwidth,scale=0.6]{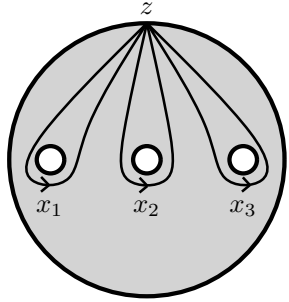}
\caption{The punctured disk~$D_3$. }
\label{fig:DiskTwisted}
\end{figure}

Fix a base point~$z$ of~$D_n$ and denote by~$x_i$ the simple loop based at~$z$ turning once around~$p_i$ counterclockwise for~$i=1,2,\dots, n$ (see Figure \ref{fig:DiskTwisted}). The group~$\pi_1(D_n)$ can then be identified with the free group~$F_n$ on the~$x_i.$ 
If~$H_\beta$ is a homeomorphism of~$D_n$ representing a braid~$\beta$, then the induced automorphism~$h_{\beta}$ of the free group~$F_n$  depends only on~$\beta$. It follows from the way we compose braids that~$h_{\alpha \beta}=h_{\beta}\circ h_{\alpha}$, and the resulting \textit{right} action of~$B_n$ on~$F_n$ can be explicitly described by
\[
h_{\sigma_i}(x_j)=
\begin{cases}
x_i x_{i+1} x_i^{-1}  & \mbox{if }  j=i, \\
 x_i                            & \mbox{if }  j=i+1, \\ 
x_j                             & \mbox{otherwise. } \\ 
 \end{cases} 
\] 
The \textit{closure} of a braid~$\beta \in B_n$ is the oriented link~$\hat{\beta}$ in the three-sphere obtained from~$\beta$ by adding~$n$ parallel strands in~$S^3 \setminus ({D^2 \times [0,1]})$ (see Figure \ref{fig:Closure}).

 \begin{figure}[h]
\labellist\small\hair 2.5pt
\pinlabel {$\beta$} at 60 720
\pinlabel {$\hat{\beta}$} at 350 720
\endlabellist
\centering
\includegraphics[width=0.35\textwidth,scale=1.2]{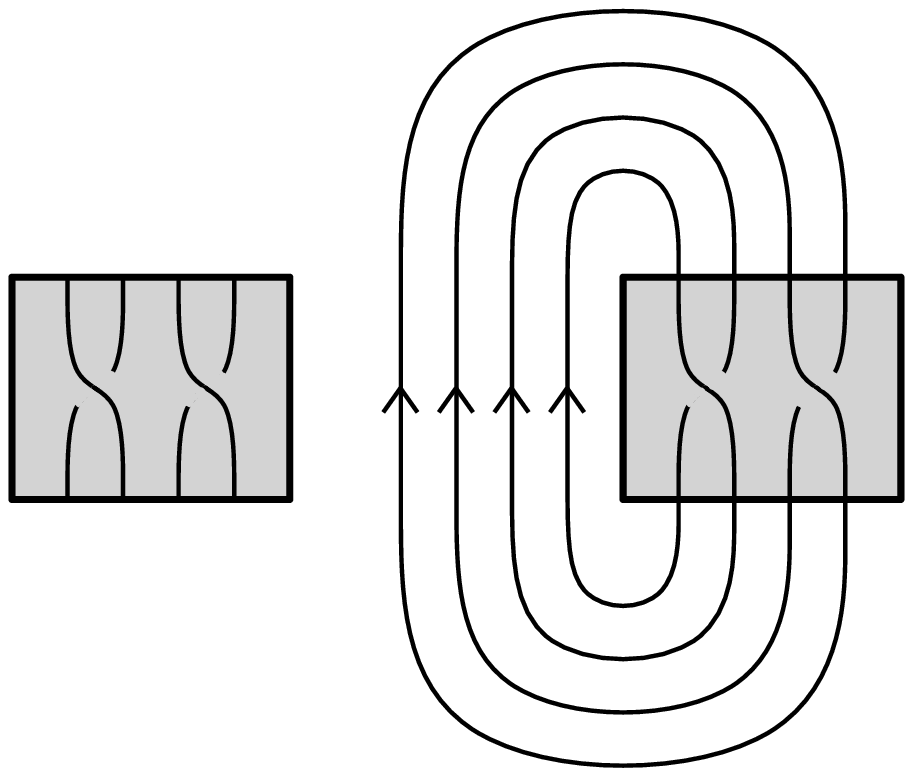}
\caption{The closure of a braid.}
\label{fig:Closure}
\end{figure}

\subsection{$L^2$-homology of CW-complexes}
\label{sub:L2homology}
Following \cite{Luc02,DFL}, we recall the definition of the $L^2$-homology of a CW-complex associated with an admissible triple. We then make an explicit computation in the case of the punctured disk.
\medbreak
Let~$X$ be a compact connected CW-complex endowed with a basepoint~$z$, and let~$Y$ be a connected CW-subcomplex of~$X$. We denote by~$p\colon \widetilde{X} \rightarrow X$ the universal cover of~$X$ and write~$\widetilde{Y}=p^{-1}(Y)$. 
Setting~$\pi = \pi_1(X,z)$, an \textit{admissible triple}~$(\pi,\phi,\gamma)$ consists in homomorphisms~$\phi \colon \pi \rightarrow \mathbb{Z}$ and~$\gamma \colon \pi \rightarrow G$ such that~$\phi$ factors through~$\gamma$. Given such a triple and $t>0$, if we denote by
$$\kappa(\pi, \phi, \gamma,t) \colon \mathbb{Z}[\pi] \rightarrow \mathbb{R}[G]$$ 
the ring homomorphism determined by~$g \mapsto t^{\phi(g)}\gamma(g)$ for $g \in \pi$, then there is a right action of~$\pi$ on~$\ell^2(G)$ given by~$a \cdot g= R_{\kappa(\pi, \phi, \gamma,t)(g)} (a)$, where~$a \in \ell^2(G)$ and~$g \in \pi$; this turns~$\ell^2(G)$ into a right~$\mathbb{Z}[\pi]$-module.

On the other hand, the natural left action of~$\pi = \pi_1(X,z)$ on~$\widetilde{X}$ gives rise to a left~$\mathbb{Z}[\pi]$-module structure on the cellular chain complex~$C_*(\widetilde{X},\widetilde{Y})$. The \textit{$\mathcal{N}(G)$-cellular chain complex} of the pair~$(X,Y)$ associated to~$(\phi,\gamma,t)$ is the finite Hilbert~$\NN(G)$-chain complex
$$ C_*^{(2)}(X,Y; \phi, \gamma,t)=\ell^2(G) \otimes_{\mathbb{Z}[\pi]}C_*(\widetilde{X},\widetilde{Y}),$$
and the \textit{$L^2$-homology} of~$(X,Y)$ associated to~$(\phi,\gamma,t)$, denoted ~$H_*^{(2)}(X,Y;\phi, \gamma,t)$, is obtained by taking the~$L^2$-homology of~$C_*^{(2)}(X,Y;\phi, \gamma,t)$.

\begin{lemma}
\label{lem:Relative$L^2$-}
Given~$z \in D_n$, for all admissible $(\pi,\phi,\gamma)$ and all $t>0$, the finitely generated Hilbert~$\mathcal{N}(G)$-module~$H_1^{(2)}(D_n,z;\phi, \gamma,t)$ has von Neumann dimension $n$.
\end{lemma}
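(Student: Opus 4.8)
The plan is to compute the $L^2$-homology directly from a CW-structure on $D_n$, using the fact that $D_n$ deformation retracts onto a wedge of $n$ circles. First I would fix a CW-structure on $D_n$ with a single $0$-cell (the basepoint $z$) and $n$ $1$-cells, one for each generating loop $x_i$; with this structure the pair $(D_n,z)$ has relative chain complex $C_1(\widetilde{D_n},\widetilde{z}) = \Z[\pi]^n$ and $C_0(\widetilde{D_n},\widetilde{z}) = 0$, since the unique $0$-cell lies in $Y = \{z\}$. After tensoring with $\ell^2(G)$ over $\Z[\pi]$ via $\kappa(\pi,\phi,\gamma,t)$, the $\mathcal{N}(G)$-chain complex $C_*^{(2)}(D_n,z;\phi,\gamma,t)$ is concentrated in degree $1$ and equals $\ell^2(G)^n$ with all boundary maps zero. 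Hence $H_1^{(2)}(D_n,z;\phi,\gamma,t) = \ell^2(G)^n$, whose von Neumann dimension is $n$ by the normalization $\dim_{\mathcal{N}(G)}(\ell^2(G)) = 1$ and additivity of the von Neumann dimension over direct sums.

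The one subtlety is whether the relative chain complex is genuinely this simple, i.e. whether the chosen CW-structure really has no relative $0$-cells and no relative $2$-cells. Since $D_n$ is homotopy equivalent to a wedge of $n$ circles, it admits a CW-structure (equivalently, a spine) with exactly one $0$-cell and $n$ $1$-cells and no higher cells; taking $Y$ to be that single $0$-cell gives $C_0(\widetilde{D_n},\widetilde{z}) = 0$ and $C_p(\widetilde{D_n},\widetilde{z}) = 0$ for $p \geq 2$, so $C_1 = \Z[\pi]^n$ with zero differentials on both sides. One should also note that the $L^2$-homology, being defined via the $\mathcal{N}(G)$-chain complex up to chain homotopy equivalence, does not depend on this particular choice of CW-structure, which justifies working with the spine rather than with an arbitrary triangulation of $D_n$.

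The only step requiring genuine care — and the main (mild) obstacle — is tracking the module structures correctly: $C_*(\widetilde{D_n},\widetilde{z})$ is a \emph{left} $\Z[\pi]$-module via deck transformations, while $\ell^2(G)$ is viewed as a \emph{right} $\Z[\pi]$-module via $a \cdot g = R_{\kappa(\pi,\phi,\gamma,t)(g)}(a)$, so that the tensor product $\ell^2(G) \otimes_{\Z[\pi]} C_*$ makes sense and yields a left $\mathcal{N}(G)$-module. But since the relative chain complex is free on the cells with trivial differentials, the twisting by $\kappa(\pi,\phi,\gamma,t)$ plays no role here — it would matter only if there were nonzero boundary operators — and one simply obtains $\ell^2(G)^n$. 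Therefore $H_1^{(2)}(D_n,z;\phi,\gamma,t) \cong \ell^2(G)^n$ as finitely generated Hilbert $\mathcal{N}(G)$-modules, and $\dim_{\mathcal{N}(G)}(H_1^{(2)}(D_n,z;\phi,\gamma,t)) = n$, for every admissible triple $(\pi,\phi,\gamma)$ and every $t>0$, as claimed.
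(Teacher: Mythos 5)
Your proof is correct and follows essentially the same route as the paper: replace $D_n$ by a wedge of $n$ circles (one $0$-cell, $n$ one-cells), observe that the relative $\mathcal{N}(G)$-chain complex of the pair is concentrated in degree $1$ where it equals $\ell^2(G)^n$, and conclude. The only wording to fix is that the punctured disk, being a surface, does not literally carry a CW structure with no $2$-cells; one works with the homotopy-equivalent wedge and invokes homotopy invariance of the von Neumann dimension of $L^2$-homology, which is exactly the reduction you make in your second paragraph and the one the paper makes via \cite[Theorem 1.21]{Luc02}.
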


\begin{proof}
The punctured disk~$D_n$ is simple homotopy equivalent to~$X$, the wedge of the~$n$ loops representing the generators of~$\pi_1(D_n)$ described in Section \ref{sub:braids}. As a consequence, it follows from \cite[Theorem 1.21]{Luc02} and the proof of \cite[Theorem 2.12]{BAthesis} that $H_1^{(2)}(X,z;\phi, \gamma,t)$ and $H_1^{(2)}(D_n,z;\phi, \gamma,t)$ have same von Neumann dimension. Thus it suffices to prove the claim for $X$.

 Choose a cellular decomposition of this latter space~$X$ consisting of the~$0$-cell~$z$ (the basepoint of the wedge) and one~$1$-cell~$x_i$ for each loop. For~$i=1,2,\dots,n$, let~$\widetilde{x}_i$ be the lift of~$x_i$ starting at an (arbitrary) fixed lift of~$z$. With this cell structure, the~$\mathcal{N}(G)$-cellular chain complex of the pair~$(X,z)$ associated to~$(\phi,\gamma,t)$ is~$ 0 \to C_1^{(2)}(X,z;\phi, \gamma,t) \to C_0^{(2)}(X,z;\phi, \gamma,t) \to 0$, where
$$ C_1^{(2)}(X,z;\phi, \gamma,t)
 =\ell^2(G) \otimes_{\mathbb{Z}[\pi]} C_1(\widetilde{X},\widetilde{z}) \\
\cong \bigoplus_{i=1}^n \ell^2(G) \widetilde{x}_i.~$$
Since~$C_0^{(2)}(\widetilde{X},\widetilde{z})$ vanishes,~$H_1^{(2)}(X,z;\phi, \gamma,t)=C_1^{(2)}(X,z;\phi, \gamma,t)$ and the claim follows. 
\end{proof}

Given an admissible triple~$\left (\pi_1(X',z'),\phi,\gamma\colon \pi_1(X',z') \to G\right)$, note that a basepoint-preserving homeomorphism of pairs~$F \colon (X,Y) \rightarrow (X',Y')$ induces an isomorphism~$f\colon \pi_1(X) \to \pi_1(X')$ and 
isomorphisms of finitely generated Hilbert~$\mathcal{N}(G)$-modules
$$H_i^{(2)}(F)\colon
H_i^{(2)}(X,Y; \phi \circ f, \gamma \circ f,t) 
\to
H_i^{(2)}(X',Y';\phi, \gamma, t).$$
The precomposition by~$f$ is required for the homeomorphism~$F$ to induce a well-defined chain map.

\begin{example}
\label{ex:braidInduced}
Fix a basepoint~$z \in \partial D_n$ as in Figure \ref{fig:DiskTwisted}. Let~$H_\beta: D_n \rightarrow D_n$ be a homeomorphism representing a braid~$\beta \in B_n$. As~$H_\beta$ fixes the boundary of the disk, it lifts uniquely to a homeomorphism~$\widetilde{H}_\beta\colon \widetilde{D}_n \rightarrow \widetilde{D}_n$ which preserves a fixed lift of~$z$. Up to homotopy, this lift depends uniquely on the isotopy class of~$H_\beta$ and consequently the map induced on the chain group~$C_1(\widetilde{D}_n,\widetilde{z})$ depends uniquely on the braid~$\beta$. 

Denote by~$\phi  \colon \pi_1(D_n) \rightarrow \mathbb{Z}$ the epimorphism defined by~$x_i \mapsto 1$. Fixing~$t>0$ and a homomorphism~$\gamma\colon \pi_1(D_n) \to G$ through which~$\phi$ factors, each braid~$\beta$ induces a well-defined isomorphism of finitely generated Hilbert~$\mathcal{N}(G)$-modules:
$$  H_1^{(2)}(H_{\beta})\colon
H_1^{(2)}(D_n,z;\phi \circ h_{\beta},\gamma \circ h_{\beta},t) 
\to H_1^{(2)}(D_n,z;\phi ,\gamma,t).$$
Since $\phi \circ h_{\beta} = \phi$ for all braids $\beta \in B_n$, from now on we shall write $\phi$ instead of $\phi \circ h_{\beta}$.
\end{example}

\subsection{$L^2$-Alexander torsion and Fox calculus}
\label{sub:L2TorsionFox}
Following \cite{DFL}, we define the $L^2$-Alexander torsion and outline its relation to Fox calculus. We also recall the definition of the $L^2$-Alexander torsion associated to a link and discuss its behavior on split links. 
\medbreak
Given a CW-complex~$X$, fix an admissible triple~$(\pi_1(X),\phi, \gamma)$.
\begin{definition}
The \textit{$L^2$-Alexander torsion} of~$(X,\phi,\gamma)$ at~$t>0$ is defined as
$$ T^{(2)}(X,\phi,\gamma)(t) := T^{(2)}\left (C_*^{(2)}(X;\phi,\gamma,t)\right ).$$
Observe that~$T^{(2)}(X,\phi,\gamma)(t) \neq 0$ if and only if $C_*^{(2)}(X;\phi,\gamma,t)$ is weakly acyclic and of determinant class.
\end{definition}

Note that~$L^2$-Alexander torsions are only defined up to multiplication by $(t \mapsto t^k)$ with $k \in \mathbb{Z}$. For this reason, we shall write $f(t) \ \dot{=} \ g(t)$ if $f$ is equal to $g$ up to multiplication by $(t \mapsto t^k)$ for $k \in \mathbb{Z}$. Moreover the~$L^2$-Alexander torsions are invariant by simple homotopy equivalence \cite{chapman,Luc02,DFL,BAthesis}. Using this fact, we briefly review Fox calculus and outline how it can be used to compute the~$L^2$-Alexander torsion.

Denoting by~$F_n$ the free group on~$x_1,x_2,\dots,x_n$, the \textit{Fox derivative} (first introduced by Fox \citep{Fox})
$\dfrac{\partial}{\partial x_i}: \mathbb{Z}[F_n] \rightarrow \mathbb{Z}[F_n]~$  is the linear extension of the map defined on elements of~$F_n$ by 
$$
\frac{\partial x_j}{\partial x_i}=\delta_{ij}, \ \ \ \ \ \ \ \ \ \frac{\partial x_j^{-1}}{\partial x_i}=-\delta_{ij} x_j^{-1}, \ \ \ \ \ \ \ \ \  \frac{\partial (uv)}{\partial x_i}=\frac{\partial u}{\partial x_i}+u\frac{\partial v}{\partial x_i}.
$$
If~$P = \langle x_1, \ldots, x_n | r_1, \ldots, r_{m} \rangle$ is a presentation of a group~$\pi$, construct the~$2$-complex~$W_P$ with one~$0$-cell~$v$,~$n$ oriented~$1$-cells labeled~$x_1, x_2,\dots,x_n$ and~$m$ oriented~$2$-cells~$c_1, c_2,\dots, c_{m}$ with each~$\partial c_j$ glued to the~$1$-cells according to the word~$r_i$. Note that $\pi_1(W_P)=\pi$ and let~$\widetilde{v}, \widetilde{x}_i$ and~$\widetilde{c}_j$ be corresponding lifts to the universal cover~$p: \widetilde{W}_P \rightarrow W_P$ (i.e. each~$\widetilde{x}_i$ starts at~$\widetilde{v}$ and the first word in the boundary of~$\widetilde{c}_j$ is of the form~$\widetilde{x}_i$).

Denote by~$\mathrm{pr}: \mathbb{Z}[F_n] \rightarrow \mathbb{Z}[\pi]$ the ring homomorphism induced by the quotient map. The ~$\Z[\pi]$-module~$C_1(\widetilde{W}_P,p^{-1}(v))$ is generated by the~$\widetilde{x}_i$, and if~$w$ is a word in the~$x_i$, then its lift~$\widetilde{w}$ (viewed as a~$1$-chain in the universal cover) can be written as
$$ \widetilde{w}=\sum_{i=1}^n \mathrm{pr} \left( \frac{\partial w}{\partial
x_i} \right) \widetilde{x}_i.$$
Since the boundary map~$\partial_2$ of the chain complex~$C_*(\widetilde{W}_P)$ sends~$\widetilde{c}_j$ to the lift of~$r_j$ beginning at~$\widetilde{v}$, the previous equation specializes to
$$ \partial_2(\widetilde{c}_j)=\sum_{i=1}^n  \mathrm{pr} \left( \frac{\partial r_j}{\partial x_i} \right) \widetilde{x}_i.$$
We shall assume that the elements in the chain complex~$C_*(\widetilde{W}_P)$ of free left~$\mathbb{Z}[\pi]$-modules are column vectors and that the matrices of the differentials act by left multiplication.  Consequently,~$\partial_2$ is represented by the~$(n \times m)$ matrix whose~$(i,j)$-coefficient is 
$ \mathrm{pr} \left( \frac{\partial r_j}{\partial
x_i} \right).~$

Combining these remarks with Propositions \ref{prop operations det}, \ref{prop:tau chaine} and the fact that
for any integer~$k$ and $t>0$,~$\max\left (1,t^k\right ) = t^{\frac{k-|k|}{2}} \max(1,t)^{|k|} \ \dot{=} \ \max(1,t)^{|k|}$,
 the following result is immediate.

\begin{proposition}
\label{prop:Fox torsion}
Let~$P= \langle x_1, \ldots, x_n | r_1, \ldots, r_{n-1} \rangle$ be a deficiency one presentation of a group~$\pi$, fix $t>0$ and let~$(\pi, \phi\colon \pi \to \Z,\gamma\colon \pi \to G)$ be an admissible triple.
If one denotes by~$A$ the~matrix
in~$M_{n-1,n-1}(\C[G])$
 whose~$(i,j)$ component is
$$ \kappa(\pi,\phi,\gamma,t)\left (
\mathrm{pr} \left (
\dfrac{\partial r_j}{\partial x_i} \right )\right )$$ and one assumes that~$\gamma(x_n)$ has infinite order in~$G$, then
$$T^{(2)}(W_P,\phi,\gamma)(t)
\ \dot{=} \ \dfrac{
\det^r_{\mathcal{N}(G)}\left (
R_{A} \right )}
{ \det^r_{\mathcal{N}(G)}\left (t^{\phi(x_n)} R_{\gamma(x_n)} - \Id\right )} 
\ \dot{=} \  \dfrac{
\det^r_{\mathcal{N}(G)}\left (
R_{A} \right )}
{ \max(1,t)^{|\phi(x_n)|}}.
$$
Moreover, if $M$ is an irreducible~$3$-manifold with non-empty toroidal boundary and infinite $\pi = \pi_1(M)$, then
$$T^{(2)}(M,\phi,\gamma)(t)
\ \dot{=} \ \dfrac{
\det^r_{\mathcal{N}(G)}\left (
R_{A} \right )}
{ \det^r_{\mathcal{N}(G)}(t^{\phi(x_n)} R_{\gamma(x_n)} - \Id)} 
\ \dot{=} \  \dfrac{
\det^r_{\mathcal{N}(G)}\left (
R_{A} \right )}
{\max(1,t)^{|\phi(x_n)|}} .
$$
\end{proposition}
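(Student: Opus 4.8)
The plan is to reduce everything to an application of Propositions \ref{prop:tau chaine} and \ref{prop operations det} via the Fox-calculus description of the cellular chain complex that has just been set up. First I would treat the case of the two-complex $W_P$. Since $P$ has deficiency one, the complex $C_*(\widetilde{W}_P, p^{-1}(v))$ is concentrated in degrees $1$ and $2$, with $C_2 = \Z[\pi]^{n-1}$ free on the $\widetilde{c}_j$ and $C_1 = \Z[\pi]^n$ free on the $\widetilde{x}_i$ (the relative $0$-chains vanish since we collapse $v$). Applying $\ell^2(G) \otimes_{\Z[\pi]} -$ through the ring homomorphism $\kappa(\pi,\phi,\gamma,t)$ turns $\partial_2$ into the operator $R_B$, where $B \in M_{n,n-1}(\C[G])$ has $(i,j)$-entry $\kappa(\pi,\phi,\gamma,t)(\mathrm{pr}(\partial r_j/\partial x_i))$, while $\partial_1$ sends $\widetilde{x}_i$ to $(t^{\phi(x_i)}\gamma(x_i) - 1)\widetilde{v}$, i.e.\ is zero after collapsing $v$. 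Thus $C_*^{(2)}(W_P;\phi,\gamma,t)$ is precisely a complex of the shape appearing before Proposition \ref{prop:tau chaine} with $k = n-1$, $l = 1$, and with the roles of $\partial_1,\partial_2$ there played by $\partial_1 = 0$ and $\partial_2 = R_B$; but this degenerate situation is not quite what that proposition wants, so instead I would use the \emph{absolute} complex $C_*^{(2)}(W_P;\phi,\gamma,t)$ (not relative to $v$), in which $\partial_1 = R_{(t^{\phi(x_i)}\gamma(x_i)-1)_i}\colon \ell^2(G)^n \to \ell^2(G)$ and $\partial_2 = R_B\colon \ell^2(G)^{n-1}\to\ell^2(G)^n$. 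Here $k = n-1$, $l = 1$.

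Next I would pick $J = \{n\}$. Then $\partial_1(J) = R_{t^{\phi(x_n)}\gamma(x_n) - 1}$, and since $\gamma(x_n)$ has infinite order, Proposition \ref{prop operations det} applies (with $t$ replaced by $t^{-\phi(x_n)}\gamma(x_n)^{-1}$-type manipulation, or directly: $t^{\phi(x_n)}\gamma(x_n)-\Id = -(\Id - t^{\phi(x_n)}R_{\gamma(x_n)^{-1}})\cdot R_{\gamma(x_n)}$ up to a unitary, so it is injective of determinant class with $\det^r = \max(1, t^{\phi(x_n)}) \ \dot = \ \max(1,t)^{|\phi(x_n)|}$ using the stated identity $\max(1,t^k) \ \dot = \ \max(1,t)^{|k|}$). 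Meanwhile $\partial_2(J) = R_A$, where $A$ is $\partial_2 = B$ with its $n$-th row deleted, which is exactly the matrix $A \in M_{n-1,n-1}(\C[G])$ in the statement. Proposition \ref{prop:tau chaine} then yields $T^{(2)}(C_*^{(2)}(W_P;\phi,\gamma,t)) = \det^r_{\mathcal N(G)}(R_A)/\det^r_{\mathcal N(G)}(t^{\phi(x_n)}R_{\gamma(x_n)}-\Id)$, and combined with the determinant computation this is the displayed formula. Since $T^{(2)}(W_P,\phi,\gamma)(t)$ is by definition $T^{(2)}(C_*^{(2)}(W_P;\phi,\gamma,t))$, the first claim follows.

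For the $3$-manifold statement, the plan is to invoke simple homotopy invariance of the $L^2$-Alexander torsion \cite{chapman,Luc02,DFL,BAthesis}. An irreducible $3$-manifold $M$ with non-empty toroidal boundary and infinite fundamental group is aspherical and homotopy equivalent to a finite $2$-complex; moreover Wirtinger-type arguments (or the standard fact that such $M$ admits a handle decomposition, hence a presentation, of deficiency one — its Euler characteristic is $0$ and $H_1$ has positive rank because of the boundary tori) show that $\pi = \pi_1(M)$ admits a deficiency one presentation $P$ with $W_P$ simple homotopy equivalent to $M$. One also needs $\gamma(x_n)$ of infinite order for some generator $x_n$: since $\phi$ is nontrivial on $\pi$ (it detects the boundary), after possibly changing the generating set one of the $x_i$ maps to infinite order in $G$ (indeed $\phi$ factors through $\gamma$, so if $\phi(x_n)\neq 0$ then $\gamma(x_n)$ has infinite order). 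Then $T^{(2)}(M,\phi,\gamma)(t) \ \dot = \ T^{(2)}(W_P,\phi,\gamma)(t)$ and the already-proved first part finishes the argument.

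The main obstacle I anticipate is the bookkeeping around the degenerate/absolute chain complex: making sure the indexing $k = n-1$, $l = 1$ and the choice $J = \{n\}$ genuinely match the hypotheses of Proposition \ref{prop:tau chaine} (in particular that $\partial_1(J)$ is injective and of determinant class, which is exactly where infinite order of $\gamma(x_n)$ enters), and correctly identifying $\partial_2(J)$ with the Fox matrix $A$ after the row deletion, while keeping track of the $\ \dot = \ $ ambiguity coming from the factors $t^{\phi(x_n)}$ and the unitary $R_{\gamma(x_n)}$. The $3$-manifold part is comparatively soft once one cites the existence of a deficiency one presentation with the right simple homotopy type and the infinite-order generator — these are standard but should be stated carefully.
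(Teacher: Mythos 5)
Your proof is correct and follows essentially the same route as the paper, which declares the first formula immediate from the Fox-calculus description of $C_*^{(2)}(W_P;\phi,\gamma,t)$ together with Propositions \ref{prop:tau chaine} and \ref{prop operations det} (taking $J=\{n\}$, so that $\partial_1(J)$ is the single operator $t^{\phi(x_n)}R_{\gamma(x_n)}-\Id$ and $\partial_2(J)=R_A$), and deduces the $3$-manifold case from simple homotopy invariance of the torsion together with Lemma \ref{lem:simplehomot}. The only blemish is your auxiliary identity $t^{\phi(x_n)}R_{\gamma(x_n)}-\Id=-\left(\Id-t^{\phi(x_n)}R_{\gamma(x_n)^{-1}}\right)R_{\gamma(x_n)}$, whose right-hand side actually equals $t^{\phi(x_n)}\Id-R_{\gamma(x_n)}$; the simpler observation $t^{\phi(x_n)}R_{\gamma(x_n)}-\Id=-\left(\Id-t^{\phi(x_n)}R_{\gamma(x_n)}\right)$ yields the same value $\max\left(1,t^{\phi(x_n)}\right)\ \dot{=}\ \max(1,t)^{|\phi(x_n)|}$ directly from Proposition \ref{prop operations det}, so this does not affect the argument.
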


The second part of the above proposition uses the fact that the~$L^2$-Alexander torsions are invariant under simple homotopy equivalence and the following Lemma \ref{lem:simplehomot}.
Although the content of this lemma is known \cite[Section 3.2]{aschenbrenner20123}, we present a short proof that (somewhat appropriately) uses $L^2$-Betti numbers.

\begin{lemma}
\label{lem:simplehomot}
Let~$M$ be an irreducible~$3$-manifold with non-empty toroidal boundary and infinite fundamental group. If~$P$ is a deficiency one presentation of~$\pi_1(M)$, then~$M$ is simple homotopy equivalent to~$W_P$.
\end{lemma}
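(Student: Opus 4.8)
The plan is to exhibit $W_P$ as obtained from $M$ (or, more precisely, from a spine of $M$) by elementary expansions and collapses, using the fact that any two deficiency one presentations of the same group are connected by Tietze transformations of a controlled type. First I would recall that an irreducible $3$-manifold $M$ with non-empty toroidal boundary is aspherical (by the sphere theorem and the fact that the boundary is incompressible once we discard the trivial cases, or directly since $\pi_1(M)$ is infinite and $M$ is irreducible with boundary). Hence $M$ is homotopy equivalent to $K(\pi,1)$ where $\pi = \pi_1(M)$. Since $M$ has non-empty boundary, it collapses to a $2$-dimensional spine $K$, which is then a finite aspherical $2$-complex with $\pi_1(K) = \pi$; moreover this collapse is a simple homotopy equivalence, so it suffices to show that $K$ is simple homotopy equivalent to $W_P$. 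A spine of a $3$-manifold with boundary has Euler characteristic $\chi(M)$, and for $M$ as in the statement $\chi(M) = 0$ (the boundary is a union of tori), so $K$ has deficiency one in the sense that $c_2(K) - c_1(K) + c_0(K) = 0$; after collapsing to a single $0$-cell one gets a presentation complex $W_{P'}$ of a deficiency one presentation $P'$ of $\pi$.

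Next I would invoke the theorem (due to Metzler, building on work going back to Tietze; see for instance the discussion in \cite{aschenbrenner20123}) that any two finite deficiency one presentations of the same finitely presented group have simple homotopy equivalent presentation complexes — equivalently, they are related by a sequence of \emph{Andrews--Curtis} style moves that do not change the simple homotopy type. Concretely, passing between $P$ and $P'$ by Tietze transformations that introduce or remove a generator together with a relator expressing it (the moves realized by elementary expansions/collapses of the $2$-complex) and by the "conjugation/insertion of a trivial relator" moves (realized by sliding attaching maps, hence a simple homotopy equivalence) shows $W_P \simeq_s W_{P'} \simeq_s K \simeq_s M$. The $L^2$-Betti number input mentioned in the text enters here to rule out the subtlety that a priori different deficiency one presentations might not be Nielsen-equivalent: one checks that a deficiency one presentation of $\pi_1(M)$ has a presentation complex which is \emph{aspherical}, by computing that its second $L^2$-Betti number $b_2^{(2)}$ vanishes (it equals $b_2^{(2)}(\pi) + 1 - \chi^{(2)}$-type bookkeeping, and for $3$-manifold groups of this kind all $L^2$-Betti numbers vanish by \cite{Luc02}), so $\pi_2(W_P) = 0$ and $W_P$ is a $K(\pi,1)$; two finite $K(\pi,1)$ $2$-complexes of the same Euler characteristic are simple homotopy equivalent.

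I expect the main obstacle to be making the "same simple homotopy type" claim precise without simply quoting \cite{aschenbrenner20123} as a black box: one must be careful that a deficiency one presentation of a $3$-manifold group need not be geometric (need not come directly from a Heegaard-type splitting), so the argument cannot just triangulate $M$ and read off a presentation. The clean route is the asphericity argument via $L^2$-Betti numbers: once $W_P$ is known to be aspherical, it and the spine $K$ are both finite $2$-dimensional $K(\pi,1)$'s, and the Euler characteristics agree (both equal $\chi(M) = 0$ after reducing to one $0$-cell), so by the classification of simple homotopy types of $2$-complexes with free abelian-free... — more simply, by Whitehead's theorem plus the fact that the Whitehead group obstruction vanishes because both complexes are spines realizing the same chain homotopy type over $\mathbb{Z}[\pi]$ with the same torsion — they are simple homotopy equivalent. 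The delicate point to handle carefully is thus the vanishing of the relevant Whitehead torsion, which is where keeping track of the explicit cellular chain complexes (rather than just homotopy types) is essential.
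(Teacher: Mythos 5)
Your proposal contains the right central idea---prove that $W_P$ is aspherical using $L^2$-Betti numbers, so that $M$ and $W_P$ are both finite $K(\pi,1)$'s and hence homotopy equivalent---but the two steps surrounding it have genuine gaps. First, the route via Tietze/Andrews--Curtis moves does not work: it is \emph{not} a theorem that two deficiency one presentations of the same group have simple homotopy equivalent presentation complexes via elementary moves; this is exactly the territory of the (generalized) Andrews--Curtis problem, and Metzler's work is concerned with obstructions to, not proofs of, such equivalences. You cannot quote this as a black box. Second, in the asphericity argument you propose to compute $b_2^{(2)}(W_P)$ from the group, but $b_2^{(2)}$ of a finite $2$-complex is not an invariant of $\pi_1$ alone; the argument has to go through $b_1^{(2)}$, which \emph{is} determined by $\pi_1$ for finite complexes, giving $b_1^{(2)}(W_P)=b_1^{(2)}(\pi_1 M)=b_1^{(2)}(M)=0$ by L\"uck's vanishing theorem for prime $3$-manifolds with infinite fundamental group, and then $\chi(W_P)=0$ forces $b_2^{(2)}(W_P)=0$, whence asphericity (Hillman). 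This is the paper's argument, and your sketch is repairable to it.

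The more serious gap is the upgrade from homotopy equivalence to \emph{simple} homotopy equivalence. Your justification (``the Whitehead torsion vanishes because both complexes realize the same chain homotopy type with the same torsion'') is circular: the torsion of the homotopy equivalence is precisely what must be shown to vanish, and the assertion that two finite aspherical $2$-complexes with the same $\pi_1$ and the same Euler characteristic are simple homotopy equivalent is not known in general. The missing input is the theorem that the Whitehead group of the fundamental group of a compact, orientable, non-spherical irreducible $3$-manifold is trivial (cited in the paper from \cite[Paragraph C.36]{aschenbrenner20123}); once $\mathrm{Wh}(\pi_1(M))=0$, every homotopy equivalence between finite complexes with this fundamental group is automatically simple, and no move-by-move argument is needed.
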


\begin{proof}
As~$M$ is an irreducible~$3$-manifold with infinite fundamental group, it is aspherical \cite[Paragraph C.1]{aschenbrenner20123}. Since the Whitehead group of the fundamental group of a compact, orientable, non-spherical irreducible 3-manifold is trivial \cite[Paragraph C.36]{aschenbrenner20123}, one only needs to show that~$M$ and~$W_P$ are homotopy equivalent. Consequently it remains to prove that ~$W_P$ is aspherical: indeed both spaces would then be~$K(\pi_1(M),1)$'s. The first~$L^2$-Betti number of a finite CW-complex depends only on its fundamental group \cite[Section 2.2]{Hill02}, therefore~$b^{(2)}_1(W_P) = b^{(2)}_1(\pi_1(M)) =  b^{(2)}_1(M)$. Since~$M$ is prime and has infinite fundamental group,~$b^{(2)}_1(M)=0$ by \cite[Theorem 4.1]{Luc02} and therefore~$b^{(2)}_1(W_P) = 0$. As~$P$ has deficiency one,~$\chi(W_P)=0$, and so~$b^{(2)}_1(W_P)=\chi(W_P)$. It now follows that~$W_P$ is aspherical  \cite[Theorem 2.4]{Hill02}.
\end{proof}

%
%
Given an oriented link~$L = L_1 \cup \ldots \cup L_\mu$ in~$S^3$, denote by~$M_L$ its exterior and by~$G_L=\pi_1(M_L)$ its group. Since any homomorphism~$\phi \colon G_L \to \Z$ factors through the abelianization~$\alpha_L \colon G_L \to H_1(M_L) \cong \mathbb{Z}^\mu$, it is determined by integers~$n_1,\ldots,n_\mu$. Following the notation of \citep{BAthesis}, we denote by~$(n_1,\dots,n_\mu) \colon H_1(M_L) \rightarrow \mathbb{Z}$ the map sending the~$i$-th meridian of~$L$ to~$n_i$ (thus~$\phi = (n_1, \ldots, n_{\mu}) \circ \alpha_L$) and we call
$$T^{(2)}_{L,(n_1, \ldots,n_\mu)}(\gamma)(t)
:= T^{(2)}(M_L, \phi,\gamma)(t)$$
the \textit{$L^2$-Alexander torsion associated to the link~$L$} and the morphism~$\gamma\colon G_L \to G$ at the value~$t>0$.

Although the next lemma is certainly well-known to the experts, to the best of our knowledge, it does not appear in this form in the literature. Therefore, we include it in our discussion.

\begin{lemma}
\label{lem:split}
Let $L$ be a link  and $n_1, \ldots, n_{\mu} \in \Z$. The following assertions are equivalent:
\begin{enumerate}
\item $L$ is split.
\item For all $t>0$, $C^{(2)}(M_L,(n_1, \ldots,n_\mu) \circ \alpha_L, \id,t)$ is not weakly acyclic.
\item The~$L^2$-Alexander torsion~$\left (t \mapsto T^{(2)}_{L,(n_1, \ldots,n_\mu)}(\id)(t) \right )$ vanishes everywhere.
\end{enumerate}
\end{lemma}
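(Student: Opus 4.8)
The plan is to prove the chain of implications $(1) \Rightarrow (2) \Rightarrow (3) \Rightarrow (1)$, with the implication $(3) \Rightarrow (1)$ being the contrapositive statement that a non-split link has non-vanishing $L^2$-torsion somewhere, which is where the real work lies.

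\textbf{The implications $(2) \Leftrightarrow (3)$.} These are essentially immediate from the definitions. By definition, $T^{(2)}_{L,(n_1,\dots,n_\mu)}(\id)(t) \neq 0$ precisely when $C_*^{(2)}(M_L, (n_1,\dots,n_\mu)\circ \alpha_L, \id, t)$ is weakly acyclic and of determinant class. So $(2)$ (non-acyclicity for all $t$) immediately gives $(3)$ (vanishing for all $t$). For the converse $(3) \Rightarrow (2)$, I would note that if the complex were weakly acyclic for some $t_0$, then (since $\id$-coefficients correspond to the universal cover and the complex of a compact $3$-manifold is always of determinant class by the determinant class results for $3$-manifold groups, or alternatively one can observe that weak acyclicity alone is being negated in $(2)$) the torsion would be nonzero at $t_0$, contradicting $(3)$. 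If determinant class is a genuine concern, I would instead prove $(1) \Rightarrow (2)$ and $\lnot(1) \Rightarrow \lnot(3)$ directly, so that the three statements close into a cycle without needing $(3) \Rightarrow (2)$ separately.

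\textbf{The implication $(1) \Rightarrow (2)$.} Suppose $L = L' \sqcup L''$ is split, so that $M_L$ is homotopy equivalent to $M_{L'} \vee M_{L''} \vee S^1$ roughly speaking; more precisely, $M_L \cong M_{L'} \natural M_{L''}$ (boundary connected sum), and $\pi_1(M_L) = \pi_1(M_{L'}) * \pi_1(M_{L''})$ is a free product of two infinite groups. Work with $\gamma = \id$, so $G = G_L$. The key point is that for a free product $G = A * B$ with $A, B$ nontrivial, the relevant $L^2$-Betti number does not vanish: one can compute $b_1^{(2)}$ of the corresponding chain complex using the Mayer--Vietoris sequence for $L^2$-homology, or use the fact that $M_L$ built from $M_{L'}, M_{L''}$ and a connecting $1$-handle has $\chi(M_L) = 0$ while $b_0^{(2)} = 0$ (infinite group) forces $b_1^{(2)} - b_2^{(2)} \neq 0$ unless things collapse, combined with Poincar\'e--Lefschetz duality to pin down $b_1^{(2)}(M_L; \phi, \id, t) > 0$. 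Concretely: the twisting by $t^{\phi}$ does not affect $L^2$-Betti numbers when $\gamma = \id$ in the relevant sense because $t^{\phi(g)}\id$ factors through $\Z$ and the $\ell^2$-completion over the full group $G_L$ sees the free product structure; the free product of infinite groups has positive first $L^2$-Betti number relative to this setup. I would invoke the additivity of $L^2$-Euler characteristics over the free product decomposition to conclude $C_*^{(2)}$ is not weakly acyclic for any $t>0$.

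\textbf{The implication $\lnot(1) \Rightarrow \lnot(3)$, the main obstacle.} This is the substantive direction: if $L$ is non-split, I must exhibit \emph{some} $t_0 > 0$ at which the $L^2$-Alexander torsion is nonzero. The natural candidate is $t_0 = 1$: at $t=1$, the homomorphism $\kappa(\pi, \phi, \id, 1)$ is just the identity $\Z[\pi] \to \R[\pi]$, so $C_*^{(2)}(M_L, \phi, \id, 1)$ is the ordinary $L^2$-chain complex of the universal cover of $M_L$, and $T^{(2)}(M_L, \phi, \id)(1)$ is (up to the indeterminacy) the classical $L^2$-torsion $\rho^{(2)}(M_L)$. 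Now $M_L$ is an irreducible $3$-manifold with infinite fundamental group (irreducibility is exactly where non-splitness is used: a non-split link exterior need not be irreducible, so I may first need to reduce to the prime/irreducible pieces, or argue via the prime decomposition), and by L\"uck's results ($b_1^{(2)}(M_L) = 0$ and $M_L$ of determinant class for $3$-manifold groups) the $L^2$-torsion $\rho^{(2)}(M_L)$ is defined and nonzero — indeed for a hyperbolic piece it equals $\exp(\mathrm{vol}/6\pi) > 1$, and for graph manifold pieces it equals $1$, but in all cases it is a well-defined nonzero real number. The careful point I expect to be delicate: when $L$ is non-split but $M_L$ is reducible (e.g. a connected sum inside one component, or more precisely when the link exterior has essential spheres), I must either restrict attention to links whose exteriors are irreducible, or handle the general non-split case by decomposing along essential spheres and checking that $L^2$-torsion at $t=1$ is multiplicative over connected sum and remains nonzero. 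I would resolve this by citing the general machinery: $M_L$ has a prime decomposition, none of whose pieces is $S^1 \times S^2$ precisely because $L$ is non-split, each prime piece is then aspherical with vanishing $b_1^{(2)}$ and of determinant class, hence has nonzero $L^2$-torsion, and these multiply (using Proposition~\ref{prop:exact} applied to the sphere-splitting exact sequences) to give $T^{(2)}_{L,(n_1,\dots,n_\mu)}(\id)(1) \neq 0$. This establishes $\lnot(1) \Rightarrow \lnot(3)$ and closes the cycle.
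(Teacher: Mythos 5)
Your overall architecture (prove $(1)\Rightarrow(2)$, $(2)\Rightarrow(3)$ trivially, and $\lnot(1)\Rightarrow\lnot(3)$, closing the cycle without ever needing $(3)\Rightarrow(2)$) matches the paper, and your handling of $\lnot(1)\Rightarrow\lnot(3)$ is sound, if slightly overcomplicated: a non-split link exterior in $S^3$ is \emph{always} irreducible (any embedded $2$-sphere in $M_L$ bounds a ball on each side in $S^3$ by Alexander's theorem, and if neither ball lies in $M_L$ then $L$ is split), so your detour through prime decompositions of non-split exteriors is vacuous. Given irreducibility, your argument at $t_0=1$ via vanishing of $b_*^{(2)}(M_L)$ and determinant class does show $T^{(2)}(1)\neq 0$, which suffices to negate $(3)$; the paper instead cites Liu's theorem to get weak acyclicity and determinant class for \emph{all} $t$, which it needs anyway for the other direction.

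The genuine gap is in $(1)\Rightarrow(2)$, which requires non-weak-acyclicity for \emph{every} $t>0$, not just $t=1$. Your free-product computation $b_1^{(2)}(A*B)=b_1^{(2)}(A)+b_1^{(2)}(B)+1>0$ for infinite $A,B$ is correct and, since the first $L^2$-Betti number of a finite CW-complex depends only on its fundamental group, it settles the case $t=1$. But to cover general $t$ you assert that ``the twisting by $t^{\phi}$ does not affect $L^2$-Betti numbers when $\gamma=\id$,'' and this is exactly the non-trivial point: it is not a formal consequence of the free product structure, and as stated it is unproved. (It can be extracted from L\"uck's work on twisting $L^2$-invariants, but that must be cited, not asserted.) The paper sidesteps the issue entirely: it splits $M_L$ along the separating spheres into irreducible link exteriors $M_i$ (note this is an interior connected sum, not the boundary connected sum you describe), obtains a short exact sequence of Hilbert chain complexes whose left-hand term is built from $S^2$'s (hence $S^3$'s after capping), whose middle term is weakly acyclic for all $t$ by Liu's theorem applied to each irreducible piece together with the induction functors along the injections $\pi_1(M_i)\hookrightarrow G_L$, and whose left-hand term is never weakly acyclic because the spheres are simply connected and therefore see no $t$-twisting at all. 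Proposition \ref{prop:exact} then forces $C_*^{(2)}(M_L,\phi_L,\id,t)$ to be non-weakly-acyclic for every $t$. To repair your proof you should either adopt this Mayer--Vietoris argument or supply a reference for the $t$-independence of the twisted $L^2$-Betti numbers over the full group.
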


\begin{proof}
If the~$\mu$-component link~$L$ is not split, then its exterior~$M_L$ is irreducible, and it follows from \cite{L} that for all integers~$n_1, \ldots, n_{\mu}$ and all~$t>0$,~$C^{(2)}(M_L,(n_1, \ldots,n_\mu) \circ \alpha_L, \id,t)$ is weakly acyclic and of determinant class. Thus, in this case~$T^{(2)}_{L,(n_1, \ldots,n_\mu)}(\id)(t)$ is non-zero, proving that $(3) \Rightarrow (1)$. Moreover, $(2) \Rightarrow (3)$ is immediate.
 
Let us prove $(1) \Rightarrow (2)$. If~$L$ is split, then~$M_L$ is not irreducible, and one can write
$M_L = M_1 \sharp \ldots \sharp M_{r}$, where the~$M_i$ are irreducible link exteriors in~$S^3$. Let us order the~$M_i$ so that
$$M_{L} = (M_1 \setminus B^3) \cup
\left (\bigcup_{i=2}^{r-1} (M_i \setminus (B^3 \sqcup B^3))\right ) \cup (M_r \setminus B^3),$$ where the intersection is a disjoint union of~$r-1$ spheres~$S^2$.
Fix~$t>0$, integers~$n_1, \ldots,n_\mu \in \Z$ (we denote~$\phi_L = (n_1, \ldots,n_\mu) \circ \alpha_L$) and let~$j_i$ be the group monomorphism induced by the inclusion of~$M_i$ minus one or two balls into~$M_L$.
An immediate generalization of the proof of \cite[Theorem 3.1]{BAthesis} (see also \cite{Luc02}) implies that
$$0 \to \bigoplus_{i=1}^{r-1} C_{*}^{(2)}(S^2,1,1,t) \to
\begin{matrix}
C_{*}^{(2)}(M_1 \setminus B^3,\phi_L \circ j_1, j_1,t) \oplus	\\
\bigoplus_{i=2}^{r-1} C_{*}^{(2)}(M_i \setminus (B^3 \sqcup B^3),\phi_L \circ j_i, j_i,t) \\
\oplus C_{*}^{(2)}(M_r \setminus B^3,\phi_L \circ j_r, j_r,t)
\end{matrix}
\to
C_{*}^{(2)}(M_L,\phi_L,\id,t) \to 0
$$
is an exact sequence of finite Hilbert~$\NN(G_L)$-chain complexes.

Now, for all~$i=1, \ldots, r-1$, we add a term~$\ell^2(G_L) \widetilde{B^3} \oplus \ell^2(G_L) \widetilde{B^3}$ to the~$i$-th summand of the left part of the sequence and to the~$i$-th and~$(i+1)$-th summands of the middle part (one ball for each), where the boundary operators send one~$\widetilde{B^3}$ to the corresponding~$\widetilde{S^2}$ and the other to a corresponding~$-\widetilde{S^2}$. Since this process does not change exactness of the sequence, it follows that
$$0 \to 
\bigoplus_{i=1}^{r-1} C_{*}^{(2)}(S^3,1,1,t) \to
\bigoplus_{i=1}^{r} C_{*}^{(2)}(M_i,\phi_L \circ j_i, j_i,t) 
\to
C_{*}^{(2)}(M_L,\phi_L,\id,t) \to 0
$$
remains an exact sequence of finite Hilbert~$\NN(G_L)$-chain complexes.
Each $j_i$ is an injective group homomorphism and thus induces an induction functor, as explained in \cite[Section 1.1.5]{Luc02}. 
As weak acyclicity is unaffected by these induction functors,
the first part of the proof applied to the irreducible pieces $M_i$ shows that~$\bigoplus_{i=1}^{r} C_{*}^{(2)}(M_i,\phi_L \circ j_i, j_i,t)$ is weakly acyclic. Since the left part of the above short exact sequence is not weakly acyclic (see \cite[Theorem 1.35 (8)]{Luc02}), neither is~$C^{(2)}(M_L,(n_1, \ldots,n_\mu) \circ \alpha_L, \id,t)$ (by Proposition \ref{prop:exact}).  
\end{proof}

\section{The~$L^2$-Burau maps and the~$L^2$-Alexander torsions}
\label{sec:results}

In this section, we define the~$L^2$-Burau maps (Subsection \ref{sub:Burau}), the reduced~$L^2$-Burau maps (Subsection \ref{sub:reduced}) and relate the latter to some~$L^2$-Alexander torsions of links (Subsection \ref{sub:thm}).

\subsection{The~$L^2$-Burau map}
\label{sub:Burau}
In this subsection we define $L^2$-Burau maps and show how to compute them using Fox calculus. We wish to emphasize that since our conventions differ from \cite{Conway} (see Section \ref{sub:braids}), the resulting maps nearly behave as \textit{anti}-representations (instead of representations).
\medbreak
Denote by~$\phi  \colon \pi_1(D_n) \rightarrow \mathbb{Z}$ the epimorphism defined by~$x_i \mapsto 1$. Fix~$t>0$ and a homomorphism~$\gamma\colon \pi_1(D_n) \to G$ through which~$\phi$ factors. Given a basepoint~$z \in \partial D_n$, we saw in Example \ref{ex:braidInduced} that each braid~$\beta \in B_n$ induces a well-defined isomorphism of finitely generated~$\mathcal{N}(G)$-modules~
$$H^{(2)}_1(H_{\beta}) \colon
 H_1^{(2)}(D_n,z;\phi,\gamma \circ h_{\beta},t)
  \longrightarrow 
  H_1^{(2)}(D_n,z;\phi,\gamma,t) .
$$
 Using the same notations as in the proof of Lemma \ref{lem:Relative$L^2$-}, we shall call the basis resulting from the isomorphism
$$H_1^{(2)}(D_n,z;\phi,\gamma,t) \cong \bigoplus_{i=1}^n \ell^2(G) \widetilde{x}_i$$
the \textit{good basis} of~$H_1^{(2)}(D_n,z;\phi,\gamma,t).$ With respect to the good bases of~$H_1^{(2)}(D_n,z;\phi,\gamma \circ h_{\beta},t)~$ and~$H_1^{(2)}(D_n,z;\phi,\gamma,t)$, the isomorphism of finitely generated~$\mathcal{N}(G)$-modules~$H^{(2)}_1(H_{\beta})$ gives rise to a~$n \times n$ matrix $\mathcal{B}^{(2)}_{t,\gamma}(\beta)$ with coefficients in~$B(\ell^2(G))$.

\begin{definition}
The \textit{$L^2$-Burau map}~$\mathcal{B}^{(2)}_{t,\gamma}$
associated to the value~$t>0$ and the homomorphism~$\gamma$ sends a braid~$\beta \in B_n$ to the matrix
$\mathcal{B}^{(2)}_{t,\gamma}(\beta)
\in M_{n,n}\left (B(\ell^2(G))\right )$ representing the
isomorphism of finitely generated Hilbert~$\mathcal{N}(G)$-modules defined above. 
\end{definition}

The next lemma shows that while the~$L^2$-Burau map is generally not an (anti-)representation, it is nevertheless determined by the generators of~$B_n$.

\begin{lemma}
\label{lem:cocycle}
Given two braids~$\alpha,\beta \in B_n$, the equation
$$
\mathcal{B}^{(2)}_{t,\gamma}(\alpha \beta)
=
\mathcal{B}^{(2)}_{t,\gamma}(\beta) \circ
\mathcal{B}^{(2)}_{t,\gamma \circ h_{\beta}}(\alpha)
$$
holds for all~$t>0$ and for all~$\gamma \colon \pi_1(D_n) \rightarrow G$ through which~$\phi$ factors.
\end{lemma}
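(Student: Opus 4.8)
The plan is to unwind the definition of the $L^2$-Burau map and reduce the claimed equality to the functoriality of the isomorphisms induced on $L^2$-homology by homeomorphisms of $D_n$, while carefully tracking the three coefficient homomorphisms $\gamma$, $\gamma\circ h_\beta$ and $\gamma\circ h_{\alpha\beta}=\gamma\circ h_\beta\circ h_\alpha$ at play. Since $\mathcal{B}^{(2)}_{t,\gamma}$ does not depend on the choice of representing homeomorphism, I would fix homeomorphisms $H_\alpha,H_\beta$ of $D_n$ fixing $\partial D_n$, observe that $H_\beta\circ H_\alpha$ induces $h_\beta\circ h_\alpha=h_{\alpha\beta}$ on $\pi_1(D_n)=F_n$ and hence represents $\alpha\beta$ (by faithfulness of the braid action on $F_n$, equivalently by the identification $B_n\cong\mathrm{MCG}(D_n,\partial D_n)$), and then choose $H_{\alpha\beta}:=H_\beta\circ H_\alpha$ as the representative of $\alpha\beta$. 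Note also that $\phi$ still factors through $\gamma\circ h_\beta$ and $\gamma\circ h_{\alpha\beta}$ since $\phi\circ h_\beta=\phi$, so the right-hand side makes sense.

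First I would note that the lift $\widetilde{H_{\alpha\beta}}$ of $H_{\alpha\beta}$ fixing the chosen lift $\widetilde z$ of $z$ is exactly $\widetilde{H_\beta}\circ\widetilde{H_\alpha}$: the latter covers $H_\beta\circ H_\alpha$ and fixes $\widetilde z$, so it is that lift by uniqueness. After a cellular approximation — which changes none of the induced maps on $L^2$-homology — this yields the equality of twisted cellular chain maps $(\widetilde{H_{\alpha\beta}})_*=(\widetilde{H_\beta})_*\circ(\widetilde{H_\alpha})_*$ on $C_*(\widetilde{D}_n,\widetilde z)$, where $(\widetilde{H_\alpha})_*(g\cdot c)=h_\alpha(g)\cdot(\widetilde{H_\alpha})_*(c)$ and similarly for $\beta$ and $\alpha\beta$.

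The core step is to tensor with $\ell^2(G)$, choosing on each copy of $\ell^2(G)$ the $\Z[\pi]$-module structure that turns these twisted chain maps into morphisms of $\NN(G)$-chain complexes — this is precisely the ``precomposition by $f$'' recalled after Lemma~\ref{lem:Relative$L^2$-}. Taking $\kappa(\pi,\phi,\gamma\circ h_\alpha,t)$ on the source and $\kappa(\pi,\phi,\gamma,t)$ on the target (here one uses $\phi\circ h_\alpha=\phi$), the morphism $\id\otimes(\widetilde{H_\alpha})_*$ induces $H_1^{(2)}(H_\alpha)$; carrying this out for $H_\beta$ (target coefficient $\gamma$, source $\gamma\circ h_\beta$) and for $H_\alpha$ with target coefficient $\gamma\circ h_\beta$ (source then $\gamma\circ h_{\alpha\beta}$), the chain-level identity above tensors up to
\[
H_1^{(2)}(H_{\alpha\beta})\;=\;H_1^{(2)}(H_\beta)\circ H_1^{(2)}(H_\alpha),
\]
in which the rightmost factor is the map $H_1^{(2)}(D_n,z;\phi,\gamma\circ h_{\alpha\beta},t)\to H_1^{(2)}(D_n,z;\phi,\gamma\circ h_\beta,t)$ whose good-basis matrix is, by definition, $\mathcal{B}^{(2)}_{t,\gamma\circ h_\beta}(\alpha)$. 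Finally, all the modules involved carry good bases coming from the single cell structure on the wedge $X\simeq D_n$ used in the proof of Lemma~\ref{lem:Relative$L^2$-}, so these bases are compatible along the composition; reading off the matrices (column vectors acted on the left) turns the displayed identity into $\mathcal{B}^{(2)}_{t,\gamma}(\alpha\beta)=\mathcal{B}^{(2)}_{t,\gamma}(\beta)\circ\mathcal{B}^{(2)}_{t,\gamma\circ h_\beta}(\alpha)$.

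I expect the \emph{main obstacle} to be precisely this coefficient bookkeeping: making rigorous that the ``precomposition by $f$'' device is itself functorial in the two homeomorphisms, i.e. that the substitution $\gamma\mapsto\gamma\circ h_\alpha$ commutes suitably with tensoring $\ell^2(G)$ over $\Z[\pi]$ and introduces no spurious power of $t$ — the latter being exactly what $\phi\circ h_\beta=\phi$ guarantees. Everything else (uniqueness of lifts, cellular approximation, compatibility of the good bases) is routine.
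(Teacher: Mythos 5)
Your proposal is correct and follows essentially the same route as the paper: the paper's proof is exactly the observation that the lift of $H_{\alpha\beta}$ fixing $\widetilde z$ coincides with $\widetilde{H}_\beta\circ\widetilde{H}_\alpha$, so that $\mathcal{B}^{(2)}_{t,\gamma}(\alpha\beta)$ factors as the composite $H_1^{(2)}(D_n,z;\phi,\gamma\circ h_{\alpha\beta},t)\to H_1^{(2)}(D_n,z;\phi,\gamma\circ h_\beta,t)\to H_1^{(2)}(D_n,z;\phi,\gamma,t)$ of $\mathcal{B}^{(2)}_{t,\gamma\circ h_\beta}(\alpha)$ followed by $\mathcal{B}^{(2)}_{t,\gamma}(\beta)$. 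Your version simply spells out the coefficient bookkeeping and the tensoring step that the paper leaves implicit.
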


\begin{proof} Since the lift of $H_{\alpha \beta}$ to the universal cover coincides with the lift of $H_{\beta}\circ H_{\alpha}$, the composition 
$$  H_1^{(2)}(D_n,z;\phi,\gamma \circ h_{\alpha  \beta},t) 
\stackrel{\mathcal{B}^{(2)}_{t,\gamma \circ h_{\beta}}(\alpha)}{\longrightarrow} 
H_1^{(2)}(D_n,z;\phi,\gamma \circ h_{\beta},t)
\stackrel{\mathcal{B}^{(2)}_{t,\gamma}(\beta)}{\longrightarrow} 
H_1^{(2)}(D_n,z;\phi,\gamma,t)
$$
coincides with the map~$\mathcal{B}^{(2)}_{t,\gamma}(\alpha \beta)$. 
\end{proof}

In particular, Lemma \ref{lem:cocycle} shows that if one picks a homomorphism~$\gamma$ satisfying~$\gamma \circ h_\beta=\gamma$ for each~$\beta \in B_n$, then the~$L^2$-Burau maps~$\mathcal{B}^{(2)}_{t,\gamma}$ yield anti-representations of the braid group. More generally, fixing~$\gamma\colon \pi_1(D_n) \rightarrow G$, the~$L^2$-Burau maps~$\mathcal{B}^{(2)}_{t,\gamma}$ provide anti-representations of~$B_n^\gamma:= \lbrace \beta \in B_n \ | \ \gamma \circ h_\beta=\gamma \rbrace$.

The next proposition shows that the $L^2$-Burau map can be computed via Fox calculus.
 
\begin{proposition}
\label{prop:BurauFox}
Let~$\beta \in B_n$ be a braid. If one denotes by~$A$ the~$(n \times n)$-matrix whose~$(i,j)$ component is~$$
\kappa(\pi_1(D_n),\phi,\gamma,t)
  \left(\frac{\partial  (h_{\beta}(x_j))  }{\partial x_i} \right) \in \C[G],$$
then~$\mathcal{B}_{t,\gamma}^{(2)}(\beta)$ is equal to~$R_A$.
\end{proposition}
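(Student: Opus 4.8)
The plan is to translate the definition of the $L^2$-Burau map into the language of cellular chain complexes and then identify the resulting matrix with the Fox Jacobian. Recall from the proof of Lemma~\ref{lem:Relative$L^2$-} that, using the wedge-of-circles model $X$ for $D_n$, the module $H_1^{(2)}(D_n,z;\phi,\gamma,t)$ is canonically isomorphic to $C_1^{(2)}(X,z;\phi,\gamma,t) \cong \bigoplus_{i=1}^n \ell^2(G)\widetilde{x}_i$ (since $C_0^{(2)}$ vanishes and there are no $2$-cells), and the good basis is precisely the $\widetilde{x}_i$. Therefore $\mathcal{B}^{(2)}_{t,\gamma}(\beta)$ is nothing but the matrix of the chain map $C_1^{(2)}(H_\beta)$ induced by $\widetilde{H}_\beta$ at the chain level, where the source is taken with coefficient system $\kappa(\pi_1(D_n),\phi,\gamma\circ h_\beta,t)$ and the target with $\kappa(\pi_1(D_n),\phi,\gamma,t)$.

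Next I would compute this chain map explicitly. The lift $\widetilde{H}_\beta$ sends the $1$-chain $\widetilde{x}_j$ to the lift of the loop $h_\beta(x_j)$ starting at the fixed lift $\widetilde{z}$; by the Fox calculus formula recalled in Subsection~\ref{sub:L2TorsionFox}, this lift equals $\sum_{i=1}^n \mathrm{pr}\!\left(\frac{\partial\, h_\beta(x_j)}{\partial x_i}\right)\widetilde{x}_i$ in $C_1(\widetilde{X},\widetilde{z})$, where $\mathrm{pr}\colon \Z[F_n]\to\Z[\pi_1(D_n)]$ is the (here trivial, as $\pi_1(D_n)=F_n$) projection. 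After applying $\ell^2(G)\otimes_{\Z[\pi_1(D_n)]}(-)$ with the coefficient homomorphism $\kappa(\pi_1(D_n),\phi,\gamma,t)$ on the target side, the scalar $\frac{\partial\, h_\beta(x_j)}{\partial x_i}\in\Z[F_n]$ acts on $\ell^2(G)$ by right multiplication by $\kappa(\pi_1(D_n),\phi,\gamma,t)\!\left(\frac{\partial\, h_\beta(x_j)}{\partial x_i}\right)$, because the $\pi_1(D_n)$-action on $\ell^2(G)$ is the right action $a\cdot g = R_{\kappa(\pi,\phi,\gamma,t)(g)}(a)$. Hence the $(i,j)$-entry of the matrix representing $\mathcal{B}^{(2)}_{t,\gamma}(\beta)$ is $R_{a_{i,j}}$ with $a_{i,j}=\kappa(\pi_1(D_n),\phi,\gamma,t)\!\left(\frac{\partial\, h_\beta(x_j)}{\partial x_i}\right)$, i.e.\ $\mathcal{B}^{(2)}_{t,\gamma}(\beta)=R_A$ as claimed.

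The one point that requires care is the bookkeeping of coefficient systems and the role of the precomposition by $h_\beta$ noted after Lemma~\ref{lem:Relative$L^2$-}: the chain map $C_1(\widetilde{H}_\beta)$ is $\Z[\pi_1(D_n)]$-linear only after twisting the source module structure by $h_\beta$, which is exactly why the good basis of the source lives in $H_1^{(2)}(D_n,z;\phi,\gamma\circ h_\beta,t)$ and why $\gamma$ (not $\gamma\circ h_\beta$) appears in the definition of $\kappa$ for $A$. Since $\phi\circ h_\beta=\phi$, the power-of-$t$ part of $\kappa$ is insensitive to this twist, so no $t^k$-ambiguity arises and the identification is on the nose. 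I expect this tracking of which space carries which action to be the main (though modest) obstacle; once the identification $H_1^{(2)}(D_n,z;\phi,\gamma,t)\cong C_1^{(2)}$ and the good basis are in hand, the Fox calculus formula does the rest essentially formally, and the proof is short.
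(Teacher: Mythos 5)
Your proposal is correct and follows essentially the same route as the paper: identify $H_1^{(2)}(D_n,z;\phi,\gamma,t)$ with $\ell^2(G)\otimes_{\Z[\pi_1(D_n)]}C_1(\widetilde{D}_n,\widetilde{z})$, use the Fox calculus formula $\widetilde{H}_\beta(\widetilde{x}_j)=\sum_i \frac{\partial(h_\beta(x_j))}{\partial x_i}\widetilde{x}_i$ on the chain level, and conclude via the right $\Z[\pi_1(D_n)]$-module structure of $\ell^2(G)$ that the induced operator is $R_A$. Your extra remarks on tracking the twisted coefficient system on the source are a welcome clarification of a point the paper delegates to Example~\ref{ex:braidInduced}, but they do not change the argument.
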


\begin{proof}
Fix a lift of~$z$ to the universal cover $p \colon \widetilde{D}_n \to D_n$. Given a homeomorphism~$H_\beta$ representing a braid~$\beta$, let~$\widetilde{H}_\beta$ be the map induced by the lift of~$H_\beta$ on the chain group~$C_1(\widetilde{D}_n,\widetilde{z})$ (where $\widetilde{z}=p^{-1}(z)$). As~$H_1^{(2)}(D_n,z; \phi, \gamma,t) \cong \ell^2(G) \otimes_{\mathbb{Z}[\pi_1(D_n)]}C_1(\widetilde{D}_n,\widetilde{z})$, it remains to compute the operator $\id \otimes \widetilde{H}_{\beta}$. Clearly~$\widetilde{H}_{\beta}(\widetilde{x}_j)$ is the lift of a loop representing~$h_{\beta}(x_j)$ to the universal cover. Fox calculus then shows that on the chain level
\begin{equation*}
\widetilde{H}_{\beta}(\widetilde{x}_j)   =\sum_{i=1}^n \frac{\partial (h_{\beta}(x_j))}{\partial x_i} \widetilde{x}_i.
\end{equation*}
As we view elements of the left~$\mathbb{Z}[\pi_1(D_n)]$-module~$C_1(\widetilde{D}_n,\widetilde{z})$ as column vectors,~$\widetilde{H}_\beta$ is represented by the~$(n \times n)$ matrix whose~$(i,j)$ component is~$\frac{\partial (h_{\beta}(x_j))}{\partial x_i}$. The claim now follows from the right~$\mathbb{Z}[\pi_1(D_n)]$-module structures of~$\ell^2(G)$.
\end{proof}

\begin{example}
\label{ex:sigma1unred}
 A short computation involving Fox calculus shows that
$
\frac{\partial (h_{\sigma_i}(x_i)  ) }{\partial x_i}=\frac{\partial(x_i x_{i+1} x_i^{-1})}{\partial x_i}\\
 = 1-x_i x_{i+1} x_i^{-1}, 
$
and 
$
\frac{\partial (h_{\sigma_i}(x_i) )}{\partial x_{i+1}}=\frac{\partial(x_i x_{i+1} x_i^{-1}) }{\partial x_{i+1}}= x_i.
$
Consequently, with respect to the good bases, the~$L^2$-Burau maps of~$\sigma_i$  are given by
\[
 \mathcal{B}^{(2)}_{t,\gamma}(\sigma_i)
= \Id^{\oplus (i-1)} \oplus
\begin{pmatrix}
\Id-t R_{\gamma(x_i x_{i+1} x_i^{-1})} & \Id \\
t R_{\gamma(x_i)} & 0
\end{pmatrix}
 \oplus \Id^{\oplus (n-i-1)}.
\]
 \end{example}
 
\begin{example}
Using Proposition \ref{prop:BurauFox}, let us illustrate Lemma \ref{lem:cocycle} with an example.  For $\sigma_1, \sigma_2 \in B_3$, one has
\[
 \mathcal{B}^{(2)}_{t,\gamma}(\sigma_2)
=
\begin{pmatrix}
\Id & 0 & 0 \\
0 & \Id-t R_{\gamma(x_{2} x_{3} x_2^{-1})} & \Id \\
0 & t R_{\gamma(x_{2})} & 0
\end{pmatrix},
 \mathcal{B}^{(2)}_{t,\gamma \circ h_{\sigma_2}}(\sigma_1)
=
\begin{pmatrix}
\Id-t R_{\gamma(x_1 x_{2} x_{3} x_2^{-1} x_1^{-1})} & \Id & 0 \\
t R_{\gamma(x_1)} & 0 & 0 \\
0 & 0 & \Id
\end{pmatrix},
\]
and their composition is equal to
\[
\mathcal{B}^{(2)}_{t,\gamma}(\sigma_2) \circ
\mathcal{B}^{(2)}_{t,\gamma \circ h_{\sigma_2}}(\sigma_1)
=
\begin{pmatrix}
\Id-t R_{\gamma(x_1 x_{2} x_{3} x_2^{-1} x_1^{-1})} & \Id & 0\\
t R_{\gamma(x_1)} - t^2 R_{\gamma(x_1 x_{2} x_{3} x_2^{-1})} & 0 & \Id \\
t^2 R_{\gamma(x_1 x_2)} & 0 & 0 
\end{pmatrix},
\]
which coincides with $ \mathcal{B}^{(2)}_{t,\gamma}(\sigma_1 \sigma_2)$.
 \end{example}

Let us now relate the $L^2$-Burau maps to the classical Burau representation $\mathcal{B}$. Given $\beta \in B_n$, recall that a matrix for $\mathcal{B}(\beta) \in M_{n,n}(\Z[T,T^{-1}])$ can be obtained by computing $T^{\phi}\left (\dfrac{\partial h_{\beta}(x_i)}{\partial x_j}\right )$,
where the ring homomorphism $T^{\phi}\colon \Z[F_n] \to \Z[T,T^{-1}]$ sends $x_i$ to the indeterminate $T$. For any given $\gamma$ and $t$, the $L^2$-Burau map $\mathcal{B}^{(2)}_{t,\gamma}$ holds at least as much information as the classical Burau representation, in the following sense:

\begin{proposition}\label{prop:L2classical}
Given $\beta \in B_n$, for any $t>0$ and $\gamma\colon F_n \to G$, 
there exists a map $\Theta \colon B(\ell^2(G)^n) \to M_{n,n}(\Z[T,T^{-1}])$ such that
$\Theta\left (\mathcal{B}^{(2)}_{t,\gamma}(\beta)\right ) = \mathcal{B}(\beta)$.
In particular, if $\alpha, \beta \in B_n$ and $\mathcal{B}^{(2)}_{t,\gamma}(\alpha) =
\mathcal{B}^{(2)}_{t,\gamma}(\beta)$, then $\mathcal{B}(\alpha)=\mathcal{B}(\beta)$.
\end{proposition}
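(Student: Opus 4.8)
The plan is to realise $\Theta$ as the composition of two elementary operations applied to the operator $\mathcal{B}^{(2)}_{t,\gamma}(\beta)$: first recovering the matrix $A$ over $\C[G]$ for which $\mathcal{B}^{(2)}_{t,\gamma}(\beta) = R_A$, and then pushing $A$ forward to a Laurent-polynomial matrix by a ring homomorphism that forgets $\gamma$ but retains $\phi$. The conceptual point is that, since $(\pi_1(D_n),\phi,\gamma)$ is admissible, $\phi$ factors as $\phi = \bar\phi\circ\gamma$ for some homomorphism $\bar\phi\colon G\to\Z$; this is exactly what lets one collapse the ring homomorphism $\kappa(\pi_1(D_n),\phi,\gamma,t)$ appearing in Proposition \ref{prop:BurauFox} onto the homomorphism $T^{\phi}$ that defines the classical Burau matrix.

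For the first step, I would observe that the assignment $A\mapsto R_A$ from $M_{n,n}(\C[G])$ to $B(\ell^2(G)^n)$ is injective: applying $R_A$ to the vector with the neutral element $e$ in the $j$-th coordinate and zeros elsewhere returns $a_{ij}\in\C[G]\subset\ell^2(G)$ in the $i$-th coordinate, because $R_g(e)=g$. By Proposition \ref{prop:BurauFox} the operator $\mathcal{B}^{(2)}_{t,\gamma}(\beta)$ is of this form, with $a_{ij}=\kappa(\pi_1(D_n),\phi,\gamma,t)\!\left(\frac{\partial h_\beta(x_j)}{\partial x_i}\right)$; hence $\mathcal{B}^{(2)}_{t,\gamma}(\beta)$ determines the matrix $A$, and thereby the matrix of Fox derivatives up to the homomorphism $\kappa(\pi_1(D_n),\phi,\gamma,t)$.

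For the second step, I would introduce the $\R$-algebra homomorphism $\psi\colon\R[G]\to\R[T,T^{-1}]$ determined on $h\in G$ by $h\mapsto(t^{-1}T)^{\bar\phi(h)}$; this is well defined and multiplicative since $\bar\phi$ is a homomorphism and $t^{-1}T$ is a unit. The crux is that $\psi\circ\kappa(\pi_1(D_n),\phi,\gamma,t)$ and $T^{\phi}$ are both ring homomorphisms $\Z[F_n]\to\R[T,T^{-1}]$ that send every generator $x_i$ to $T$ — for the former this is the one-line check $\psi\!\left(t^{\phi(x_i)}\gamma(x_i)\right)=t\,(t^{-1}T)^{\phi(x_i)}=T$ — and hence they agree. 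In particular $\psi(a_{ij})=T^{\phi}\!\left(\frac{\partial h_\beta(x_j)}{\partial x_i}\right)\in\Z[T,T^{-1}]$, which, up to a transposition accounting for the row/column convention used to define $\mathcal{B}$, is the $(i,j)$-entry of $\mathcal{B}(\beta)$.

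It then remains to package this: I would set $\Theta(R_A)$ to be the matrix with entries $\psi(a_{ij})$ (transposed if the convention so demands) whenever the input has the form $R_A$ with $A\in M_{n,n}(\C[G])$ — unambiguous by the injectivity above — and $\Theta\equiv0$ on the remainder of $B(\ell^2(G)^n)$; as the statement only asks for the existence of a map with the required property, this is permissible. Then $\Theta\!\left(\mathcal{B}^{(2)}_{t,\gamma}(\beta)\right)=\mathcal{B}(\beta)$ by the previous step, and the final assertion follows by applying $\Theta$ to both sides of the hypothesis $\mathcal{B}^{(2)}_{t,\gamma}(\alpha)=\mathcal{B}^{(2)}_{t,\gamma}(\beta)$. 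I do not expect a genuine obstacle here: the only delicate points are bookkeeping — verifying that $A\mapsto R_A$ is injective so that "recovering $A$" is meaningful, and aligning the transpose so that $\psi$ applied entrywise yields $\mathcal{B}(\beta)$ and not its transpose — while the one piece of real content, that $\kappa$ refines $T^{\phi}$ through $\gamma$, is forced by admissibility.
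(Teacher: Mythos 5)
Your proposal is correct and follows essentially the same route as the paper: extract the matrix $A$ from $R_A$ by evaluating on the canonical vectors (the paper's map $\theta$), then apply the ring homomorphism $g\mapsto (t^{-1}T)^{\bar\phi(g)}$ coming from the factorization $\phi=\bar\phi\circ\gamma$ (the paper's $\kappa(G,\psi,T^{\psi},t^{-1})$), and finish with a transpose. The paper writes $\Theta=\mathsf{tra}\circ\kappa(G,\psi,T^{\psi},t^{-1})\circ\theta$, which is exactly your construction.
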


\begin{proof}
Using Proposition \ref{prop:BurauFox}, $\mathcal{B}^{(2)}_{t,\gamma}(\beta)$ is the right-multiplication operator $R_A$ where the matrix $A$ has $\kappa(F_n,\phi,\gamma,t)
  \left(\frac{\partial  (h_{\beta}(x_j))}{\partial x_i} \right) \in \C[G]$ as its $(i,j)$-coefficient. By considering the map $\theta \colon B(\ell^2(G)^n) \to M_{n,n}(\ell^2(G))$ which evaluates an operator $S$ on the $n$ canonical (column) vectors of $\ell^2(G)^n$, one can extract $A=\theta(R_A)$ from $R_A$. Thus it only remains to recover $\mathcal{B}(\beta)$ from $A$.
  
Since $(\pi_1(D_n),\phi,\gamma)$ is an admissible triple, there exists a homomorphism $\psi\colon G \to \Z$ such that $\phi = \psi \circ \gamma$. Defining the homomorphism $T^\psi\colon G \to \{T^m;m \in \Z\} \subset \Z[T,T^{-1}]$ by $g \mapsto T^{\psi(g)}$, the $(i,j)$-coefficient of $\kappa(G,\psi,T^{\psi},t^{-1})(A)$ is 
  $$ \left (\kappa(G,\psi,T^{\psi},t^{-1}) \circ \kappa(F_n,\phi,\gamma,t)\right )\left(\frac{\partial  (h_{\beta}(x_j))  }{\partial x_i} \right)
  =T^{\phi}\left (\dfrac{\partial h_{\beta}(x_j)}{\partial x_i}\right ),$$
which is precisely the $(j,i)$-coefficient of $\mathcal{B}(\beta)$. 
The map $\Theta = \mathsf{tra} \circ \kappa(G,\psi,T^{\psi},t^{-1}) \circ \theta$ thus satisfies the assumptions of the proposition (where $\mathsf{tra}$ is the transpose operator).
\end{proof}

\begin{remark}
Although all $L^2$-Burau maps recover the Burau representation, different choices of $\gamma \colon F_n \rightarrow G$ produce various effects on the injectivity of the resulting maps and their defect to being anti-representations. On one end of the spectrum, if $\gamma$ is the identity, the~$L^2$-Burau maps~$\mathcal{B}^{(2)}_{t,\id} \colon B_n \to B(\ell^2(F_n)^n)$ are injective for all~$t>0$ (since~$B_n \to Aut(F_n), \beta \mapsto h_{\beta}$ is injective and automorphisms of the free group are determined by their Fox jacobian \cite[Proposition 9.8]{BZ}).  As $G$ becomes smaller, the~$L^2$-Burau maps~$\mathcal{B}^{(2)}_{t,\gamma}$ lose in  injectivity but edge closer to being actual anti-representations. As the proof of Proposition \ref{prop:L2classical} demonstrates, a critical step appears when~$\gamma$ reaches $T^\phi$: in this case, $\mathcal{B}^{(2)}_{t,T^\phi}(\beta)$ is an anti-representation which is equal to $R_{\mathsf{tra}(\mathcal{B}(\beta))}$ up to a change of variable; in particular it is known not to be faithful for $n \geq 5$ \cite{LongPaton, Bigelow}.

Summarizing, the various~$L^2$-Burau maps distinguish at least as many braids as the Burau representation (as shown in Proposition \ref{prop:L2classical}) but sometimes do better as Corollary \ref{cor:faithful} will show. 

%
%
\end{remark}

\subsection{The reduced~$L^2$-Burau map}
\label{sub:reduced}
In this subsection, we shall generalize the definition of the reduced Burau representation to the~$L^2$-setting.
\medbreak
Instead of working with the free generators~$x_1,x_2\dots,x_n$ of~$\pi_1(D_n),$ consider the elements~$g_1, g_2, \dots,g_n,$ where~$g_i=x_1 x_2 \cdots x_i$. The action of the braid group~$B_n$ on this new set of free generators for~$\pi_1(D_n)$ is given by
\[
 h_{\sigma_i}(g_j)=
\begin{cases}
 g_j                            & \mbox{if }  j \neq i, \\ 
g_{i+1}g_{i}^{-1} g_{i-1}  & \mbox{if }  j=i \neq 1, \\
g_2 g_1^{-1}            & \mbox{if } j=i=1. \\ 
 \end{cases} 
\] 
Let~$\widetilde{g}_i$ be the lift of~$g_i$ starting at a fixed lift of~$z$ (note that~$\widetilde{g}_i= \widetilde{x}_1 + \ldots + (x_1 \ldots x_{i-1}) \widetilde{x}_i$). Using the same argument as in Lemma \ref{lem:Relative$L^2$-}, one obtains the splitting 
$$H_1^{(2)}(D_n,z;\phi,\gamma,t)  = \bigoplus_{i=1}^{n-1} \ell^2(G) \widetilde{g}_i \oplus \ell^2(G) \widetilde{g}_n$$
for any $\gamma \colon F_n \to G$ through which $\phi$ factors. As~$g_n$ is always fixed by the action of the braid group, its lift~$\widetilde{g}_n$ is fixed by the lift~$\widetilde{H}_\beta$ of a homeomorphism~$H_\beta$ representing a braid~$\beta$.

\begin{definition}
The \textit{reduced~$L^2$-Burau map} sends a braid~$\beta$ to the restriction~$\overline{\mathcal{B}}_{t,\gamma}^{(2)}(\beta)$ of the~$L^2$-Burau map to the subspace of $H_1(D_n,z;\phi,\gamma \circ h_\beta,t)$ generated by $\widetilde{g}_1,\dots,\widetilde{g}_{n-1}$.
\end{definition} 

The next proposition now follows immediately.

\begin{proposition}
\label{prop:reducedfox}
If~$\widetilde{\mathcal{B}}_{t,\gamma}^{(2)}(\beta)$ denotes the~$L^2$-Burau matrix of a braid~$\beta \in B_n$ with respect to the basis of the~$\widetilde{g}_i$, then 
\[ 
\widetilde{\mathcal{B}}_{t,\gamma}^{(2)}(\beta)=
 \begin{pmatrix}
\overline{\mathcal{B}}_{t,\gamma}^{(2)}(\beta) & 0 \\
V & \Id
\end{pmatrix},   \]
where~$V \in M_{1,n-1}(B(\ell^2(G)))$.
\end{proposition}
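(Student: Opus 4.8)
The plan is to read the block shape directly off a Fox-calculus description of $\widetilde{\mathcal{B}}_{t,\gamma}^{(2)}(\beta)$, exactly along the lines of the proof of Proposition \ref{prop:BurauFox}. First I would note that, running that argument verbatim with the free generating set $g_1,\dots,g_n$ of $\pi_1(D_n)$ in place of $x_1,\dots,x_n$, the operator $\id\otimes\widetilde{H}_\beta$ on $H_1^{(2)}(D_n,z;\phi,\gamma\circ h_\beta,t)\cong\bigoplus_{i=1}^n\ell^2(G)\widetilde{g}_i$ is represented, with respect to the $\widetilde{g}_i$, by the matrix $R_{A'}$ whose $(i,j)$-entry is $\kappa(\pi_1(D_n),\phi,\gamma,t)\bigl(\tfrac{\partial h_\beta(g_j)}{\partial g_i}\bigr)$; by definition this matrix is $\widetilde{\mathcal{B}}_{t,\gamma}^{(2)}(\beta)$.

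The one substantive input, already recorded just before the definition of the reduced $L^2$-Burau map, is that $g_n=x_1x_2\cdots x_n$ is fixed by the action of $B_n$ on $\pi_1(D_n)$, so that $h_\beta(g_n)=g_n$ for every braid $\beta$ (indeed $h_{\sigma_i}(g_n)=g_n$ for each generator, since $i\le n-1<n$). Consequently $\tfrac{\partial h_\beta(g_n)}{\partial g_i}=\delta_{in}$, so the $n$-th column of $\widetilde{\mathcal{B}}_{t,\gamma}^{(2)}(\beta)$ is $(0,\dots,0,\Id)^{\mathsf T}$; equivalently, $\widetilde{H}_\beta$ preserves the summand $\ell^2(G)\widetilde{g}_n$ and acts as the identity on it.

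Splitting $\widetilde{\mathcal{B}}_{t,\gamma}^{(2)}(\beta)$ along $\bigl(\bigoplus_{i=1}^{n-1}\ell^2(G)\widetilde{g}_i\bigr)\oplus\ell^2(G)\widetilde{g}_n$ then yields the claimed form: the previous paragraph kills the top-right block and makes the bottom-right block $\Id$; the bottom-left block is the row $V\in M_{1,n-1}(B(\ell^2(G)))$ whose $j$-th entry is the right-multiplication operator $R_a$ with $a=\kappa(\pi_1(D_n),\phi,\gamma,t)\bigl(\tfrac{\partial h_\beta(g_j)}{\partial g_n}\bigr)$; and the top-left block is the $(n-1)\times(n-1)$ matrix with $(i,j)$-entry $\kappa(\pi_1(D_n),\phi,\gamma,t)\bigl(\tfrac{\partial h_\beta(g_j)}{\partial g_i}\bigr)$ for $1\le i,j\le n-1$. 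Finally I would observe that this last block is by definition $\overline{\mathcal{B}}_{t,\gamma}^{(2)}(\beta)$: since $\widetilde{H}_\beta$ fixes $\ell^2(G)\widetilde{g}_n$, restricting $\id\otimes\widetilde{H}_\beta$ to $\bigoplus_{i=1}^{n-1}\ell^2(G)\widetilde{g}_i$ and then projecting onto that summand (equivalently, passing to the quotient by $\ell^2(G)\widetilde{g}_n$) is precisely the reduced $L^2$-Burau map. There is no real obstacle here — the statement is immediate once the fixedness of $g_n$ is in hand — so the only point deserving a sentence is this identification of the top-left block, which is forced by the vanishing of the top-right block established above.
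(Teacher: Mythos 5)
Your argument is correct and matches the paper's intent exactly: the paper gives no explicit proof, stating that the proposition ``follows immediately'' from the preceding observations, namely the Fox-calculus description of the matrix in the $\widetilde{g}_i$ basis (the analogue of Proposition \ref{prop:BurauFox}) and the fact that $h_\beta(g_n)=g_n$, which are precisely the two ingredients you use. Your additional remark that the top-left block is the compression to $\bigoplus_{i=1}^{n-1}\ell^2(G)\widetilde{g}_i$ (equivalently the induced map on the quotient by the invariant summand $\ell^2(G)\widetilde{g}_n$) is a welcome clarification of what ``restriction'' means in the definition of $\overline{\mathcal{B}}_{t,\gamma}^{(2)}$.
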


One can see the matrix of operators~$\widetilde{\mathcal{B}}_{t,\gamma}^{(2)}(\beta)$ as a conjugate of~${\mathcal{B}}_{t,\gamma}^{(2)}(\beta)$ via a trigonal change of basis matrix between the good basis~$(\widetilde{x}_i)_{1 \leqslant i \leqslant n}$ and the new basis~$(\widetilde{g}_i)_{1 \leqslant i \leqslant n}$. In particular the reduced~$L^2$-Burau map also satisfies the property of Lemma \ref{lem:cocycle} :
$$
\overline{\mathcal{B}}^{(2)}_{t,\gamma}(\alpha\beta)
=
\overline{\mathcal{B}}^{(2)}_{t,\gamma}(\beta) \circ
\overline{\mathcal{B}}^{(2)}_{t,\gamma \circ h_{\beta}}(\alpha).
$$

\begin{example}
\label{ex:sigma1}
Combining Proposition \ref{prop:BurauFox} and Proposition \ref{prop:reducedfox}, the reduced~$L^2$-Burau map of~$\sigma_i \in B_n$ is given by
\begin{align*}
\overline{\mathcal{B}}_{t,\gamma}^{(2)}(\sigma_i)
 &=\Id^{\oplus(i-2)} \oplus 
\begin{pmatrix}
\Id & tR_{\gamma(g_{i+1}g_i^{-1})} & 0 \\
0 & -tR_{\gamma(g_{i+1}g_i^{-1})}  & 0 \\
0 & \Id & \Id
\end{pmatrix}
 \oplus \Id^{\oplus(n-i-2)} 
 \end{align*}
for~$1<i<n-1$, and for~$\sigma_1$ and~$\sigma_{n-1}$ it is represented by
\begin{align*}
\overline{\mathcal{B}}_{t,\gamma}^{(2)}(\sigma_1)
 &=
\begin{pmatrix}
 -tR_{\gamma(g_{2}g_1^{-1})}  & 0 \\
 \Id & \Id
\end{pmatrix}
 \oplus \Id^{\oplus (n-3) }, \\
\overline{\mathcal{B}}_{t,\gamma}^{(2)}(\sigma_{n-1})
 &=\Id^{\oplus (n-3)} \oplus 
\begin{pmatrix}
\Id & tR_{\gamma(g_{n}g_{n-1}^{-1})}  \\
0 & -tR_{\gamma(g_{n}g_{n-1}^{-1})}   \\
\end{pmatrix}.
 \end{align*}
\end{example}

\subsection{Relation to the~$L^2$-Alexander torsions of links}
\label{sub:thm}
In this subsection, we show how a particular $L^2$-Alexander torsion associated to a link can be computed from some reduced $L^2$-Burau maps. As an application, we exhibit two braids which are distinguished by the $L^2$-Burau maps but can not be told apart by the classical Burau representation.
\medbreak
Let~$X_\beta$ be the exterior of a braid~$\beta \in B_n$ in the cylinder~$D^2 \times [0,1].$ The manifold obtained by gluing~$X_\beta$ and~$X_{\xi_n}$ along~$D_n \sqcup D_n$ is nothing but the exterior of the link~$L':=\hat{\beta} \cup \partial D_n$ in~$S^3$. Identify the free group~$F_n$ with~$\pi_1(D_n)$ so that the free generators~$x_i$ correspond to the loops described in Section \ref{sub:braids}. As in Section \ref{sub:reduced}, the elements~$g_1,g_2,\dots,g_n$ then also form a free generating set of~$\pi_1(D_n)$. If~$x$ is a meridian of~$\partial D_n$, then the fiberedness of~$M_{L'}$ implies that~$G_{L'}$ admits the presentation
$$P' = \langle g_1, \ldots, g_n, x |
h_{\beta}(g_1) = x g_1 x^{-1}, \ldots, h_{\beta}(g_n) = x g_n x^{-1} \rangle.$$
The exterior~$M_L$ of~$L=\hat{\beta}$ can now be recovered by canonically pasting a solid torus on the boundary component of~$M_{L'}$ corresponding to~$\partial D_n$. Since~$h_{\beta}(g_n) = g_n$ in the free group~$F_n$,~$G_L$ thus admits the following deficiency one presentation:
$$P = \langle g_1, \ldots, g_n|
h_{\beta}(g_1) = g_1, \ldots, h_{\beta}(g_{n-1}) =  g_{n-1} \rangle.$$
Finally, denote by~$\gamma_L\colon F_n \to G_L$ the resulting quotient map. This way, if one sets~$\phi_L:=(1,\dots, 1) \circ \alpha_L$, then the map~$\phi \colon \pi_1(D_n) \rightarrow \mathbb{Z}$ described in Subsection \ref{sub:Burau} factors as~$\phi_L \circ \gamma_L$.

\begin{theorem}
\label{thm:main}
Given an oriented link~$L$ obtained as the closure of a braid~$\beta \in B_n$, one has
$$
T^{(2)}_{L,(1,\ldots,1)}(\id)(t) \cdot \max(1,t)^{n}
\ \dot{=} \ \det{}^{r}_{\mathcal{N}(G_L)}\left (
\overline{\mathcal{B}}^{(2)}_{t,\gamma_L}(\beta) - \Id^{\oplus (n-1)}
\right )$$
for all~$t > 0$.
\end{theorem}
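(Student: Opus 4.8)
The plan is to run the deficiency one presentation $P=\langle g_1,\ldots,g_n\mid h_\beta(g_1)=g_1,\ldots,h_\beta(g_{n-1})=g_{n-1}\rangle$ of $G_L$ constructed just above through Proposition \ref{prop:Fox torsion}, and then to recognise the Fox matrix it produces as $\overline{\mathcal{B}}^{(2)}_{t,\gamma_L}(\beta)-\Id^{\oplus(n-1)}$. Assume first that $L$ is not split, so that $M_L$ is irreducible with non-empty toroidal boundary and infinite fundamental group. Since $g_n=x_1x_2\cdots x_n$ has $\phi(g_n)=n\neq 0$, its image in $G_L$ has infinite order, so the $3$-manifold statement of Proposition \ref{prop:Fox torsion} applies to the admissible triple $(G_L,\phi_L,\id)$ and the presentation $P$. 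It gives
$$T^{(2)}_{L,(1,\ldots,1)}(\id)(t)\ \dot{=}\ \frac{\det^r_{\mathcal{N}(G_L)}(R_A)}{\max(1,t)^{n}},$$
where $A\in M_{n-1,n-1}(\C[G_L])$ has $(i,j)$-entry $\kappa(G_L,\phi_L,\id,t)\bigl(\mathrm{pr}(\partial r_j/\partial g_i)\bigr)$ with $r_j=h_\beta(g_j)g_j^{-1}$. Multiplying by $\max(1,t)^{n}$, it then suffices to identify $R_A$ with $\overline{\mathcal{B}}^{(2)}_{t,\gamma_L}(\beta)-\Id^{\oplus(n-1)}$.

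For that identification I would use the Leibniz and inverse rules for Fox derivatives to write $\partial r_j/\partial g_i=\partial(h_\beta(g_j))/\partial g_i-\delta_{ij}\,h_\beta(g_j)g_j^{-1}$, and then observe that $\mathrm{pr}(h_\beta(g_j)g_j^{-1})=1$ in $G_L$ since $r_j$ is a relator of $P$; hence the $(i,j)$-entry of $A$ is $\kappa(G_L,\phi_L,\id,t)\bigl(\mathrm{pr}(\partial(h_\beta(g_j))/\partial g_i)\bigr)-\delta_{ij}$. Because $\phi=\phi_L\circ\gamma_L$, the composite $\kappa(G_L,\phi_L,\id,t)\circ\mathrm{pr}$ agrees on $\Z[F_n]$ with $\kappa(\pi_1(D_n),\phi,\gamma_L,t)$, and since the $g_i$ are a free generating set of $\pi_1(D_n)$ just like the $x_i$, Proposition \ref{prop:BurauFox} applied in the $g_i$-basis shows that the matrix with $(i,j)$-entry $\kappa(\pi_1(D_n),\phi,\gamma_L,t)(\partial(h_\beta(g_j))/\partial g_i)$ represents the operator $\widetilde{\mathcal{B}}^{(2)}_{t,\gamma_L}(\beta)$. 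By Proposition \ref{prop:reducedfox} its top-left $(n-1)\times(n-1)$ block is $\overline{\mathcal{B}}^{(2)}_{t,\gamma_L}(\beta)$, and deleting the last row and column of $A$ discards exactly the contribution of the always-fixed generator $g_n$. Thus $R_A=\overline{\mathcal{B}}^{(2)}_{t,\gamma_L}(\beta)-\Id^{\oplus(n-1)}$, which settles the non-split case.

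It remains to handle split links $L$, where both sides vanish. The left-hand side is identically zero by Lemma \ref{lem:split}, so one only needs $\det^r_{\mathcal{N}(G_L)}\bigl(\overline{\mathcal{B}}^{(2)}_{t,\gamma_L}(\beta)-\Id^{\oplus(n-1)}\bigr)=0$ for all $t>0$. By the computation above, $\overline{\mathcal{B}}^{(2)}_{t,\gamma_L}(\beta)-\Id^{\oplus(n-1)}$ is the operator $\partial_2(\{n\})$ of the finite Hilbert $\mathcal{N}(G_L)$-chain complex $C_*^{(2)}(W_P,\phi_L,\id,t)$ of the presentation $2$-complex $W_P$ of $P$, while $\partial_1(\{n\})=t^{n}R_{\gamma_L(g_n)}-\Id$ is injective and of determinant class by Proposition \ref{prop operations det}. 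Proposition \ref{prop:tau chaine} then says that $\det^r_{\mathcal{N}(G_L)}(\partial_2(\{n\}))\neq 0$ would force $C_*^{(2)}(W_P,\phi_L,\id,t)$ to be weakly acyclic. But the first $L^2$-Betti number of a finite CW-complex with the coefficient system $\kappa(G_L,\phi_L,\id,t)$ depends only on its fundamental group (just as in the proof of Lemma \ref{lem:Relative$L^2$-}), and since $\chi(W_P)=0=\chi(M_L)$ while $b_0^{(2)}=b_3^{(2)}=0$ for both complexes, weak acyclicity of $C_*^{(2)}(W_P,\phi_L,\id,t)$ is equivalent to that of $C_*^{(2)}(M_L,\phi_L,\id,t)$, which is excluded for split $L$ by Lemma \ref{lem:split}. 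Hence the right-hand side vanishes too.

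The main obstacle is the matrix identification of the second paragraph rather than any conceptual difficulty: one must reconcile the two incarnations of the twisting homomorphism $\kappa$ (the Burau one, $\kappa(\pi_1(D_n),\phi,\gamma_L,t)$, and the Fox-torsion one, $\kappa(G_L,\phi_L,\id,t)\circ\mathrm{pr}$), track the change of free generators $x_i\rightsquigarrow g_i$ and the passage from the unreduced to the reduced Burau matrix, and check that the Fox derivative of the relator contributes precisely the term $-\Id^{\oplus(n-1)}$ rather than its transpose or negative. Each manipulation is routine Fox calculus, but keeping the conventions consistent — which, helpfully, Propositions \ref{prop:BurauFox} and \ref{prop:reducedfox} already fix — is where care is needed; the split-link case is a genuine but minor point one should not overlook.
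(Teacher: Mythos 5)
Your non-split case is essentially the paper's argument: apply Proposition \ref{prop:Fox torsion} to the deficiency one presentation $P$, note that $\gamma_L(g_n)$ has infinite order, and identify the Fox matrix of the relators $h_\beta(g_j)g_j^{-1}$ with $\overline{\mathcal{B}}^{(2)}_{t,\gamma_L}(\beta)-\Id^{\oplus(n-1)}$ via Propositions \ref{prop:BurauFox} and \ref{prop:reducedfox}. That part is correct, and your bookkeeping of the two incarnations of $\kappa$ and of the $x_i\rightsquigarrow g_i$ change of generators is fine.

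The split case, however, has a genuine gap. You correctly reduce to showing that $C_*^{(2)}(W_P,\phi_L,\id,t)$ is not weakly acyclic, but you derive this from the assertion that the first $L^2$-Betti number of a finite CW-complex \emph{with the twisted coefficient system} $\kappa(G_L,\phi_L,\id,t)$ depends only on its fundamental group, ``just as in the proof of Lemma \ref{lem:simplehomot}.'' The fact used in that proof is the statement for the \emph{ordinary} $L^2$-Betti numbers ($\gamma=\id$, $t=1$); it says nothing about the deformed complexes $\ell^2(G_L)\otimes_{\kappa(G_L,\phi_L,\id,t)}C_*(\widetilde X)$ at $t\neq 1$, whose chain modules carry a non-unitary $\pi$-action. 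This matters exactly here: for split $L$ the exterior $M_L$ is not irreducible, hence not aspherical, and $M_L$ is \emph{not} homotopy equivalent to the $2$-complex $W_P$, so you cannot fall back on (simple) homotopy invariance of the twisted invariants either. The twisted analogue of ``$b_1^{(2)}$ depends only on $\pi_1$'' is plausibly true (one could argue via Tietze stabilization: wedging presentation complexes with $2$-spheres does not change $H_1^{(2)}$, and $M_L$ collapses to a finite $2$-complex), but it is an extra lemma you would have to state and prove; as written, the step is unsupported.

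The paper closes this case differently: it routes through the augmented link $L'=\hat\beta\cup\partial D_n$, whose exterior \emph{is} irreducible, so that Lemma \ref{lem:simplehomot} applies to $M_{L'}$. The $L^2$-Torres formula transfers non-weak-acyclicity from $C_*^{(2)}(M_L,\phi_L,\id,t)$ to $C_*^{(2)}(M_{L'},\phi_L\circ Q,Q,t)$, hence to $C_*^{(2)}(W_{P'},\phi_L\circ Q,Q,t)$ by simple homotopy invariance, and then an explicit short exact sequence $0\to C_*^{(2)}(W_{P})\to C_*^{(2)}(W_{P'})\to D_*\to 0$ with $D_*$ weakly acyclic finishes the argument via Proposition \ref{prop:exact}. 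You need some version of this detour, or a proved twisted-$b_1^{(2)}$ lemma, to complete the split case.
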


\begin{proof}
Fix~$t>0$ and assume that~$L$ is non-split. 
Performing Fox calculus on the presentation~$P$ yields 
$$\frac{\partial( h_{\beta}(g_j) g_j^{-1} )}{\partial g_i}=\frac{\partial (h_{\beta}(g_j))}{\partial g_i}- \delta_{ij}.$$
Since~$M_L$ is irreducible and the previously described presentation~$P$ of~$G_L$ has deficiency one, combining Proposition \ref{prop:Fox torsion} 
 with the definition of the reduced Burau representation then gives
$$ T^{(2)}(M_L,\phi_L,\id)(t) \ \dot{=} \
\dfrac{ \det^r_{\mathcal{N}(G_L)}\left (\overline{\mathcal{B}}^{(2)}_{t,\gamma_L}(\beta) - \Id^{\oplus (n-1)} \right ) } 
{ \det^r_{\mathcal{N}(G_L)}(t^n R_{g_n} - \Id)}
= 
\dfrac{ \det^r_{\mathcal{N}(G_L)}\left (\overline{\mathcal{B}}^{(2)}_{t,\gamma_L}(\beta) - \Id^{\oplus (n-1)} \right ) } 
{ \max(1,t)^n}
,$$ 
which proves the theorem in the non-split case.

Next, assume that~$L$ is split. Since Lemma \ref{lem:split} implies that~$T^{(2)}_{L,(1,\ldots,1)}(\id)(t)=0$, it only remains to prove that~$ \det{}^{r}_{\mathcal{N}(G_L)}\left ( \overline{\mathcal{B}}^{(2)}_{t,\gamma_L}(\beta) - \Id^{\oplus (n-1)} \right )$ also vanishes. By Proposition \ref{prop:Fox torsion}, the latter claim reduces to proving that~$C_*^{(2)}(W_{P},\phi_L, \id,t)$ is not weakly acyclic.
As~$C_*^{(2)}(M_L,\phi_L, \id,t)$ is not weakly acyclic (by Lemma \ref{lem:split}), the~$L^2$-version of the Torres formula \cite[Theorem 3.8]{BAthesis} implies that~$C_*^{(2)}(M_{L'},\phi_L\circ Q, Q,t)$ is not weakly acyclic either, where~$Q\colon G_{L'} \to G_{L}$ is the epimorphism induced by the inclusion $M_{L'} \subset M_L$. Since~$L'$ is non split,~$M_{L'}$ is simply homotopy equivalent to~$W_{P'}$  (by Lemma \ref{lem:simplehomot}) and it follows that~$C_*^{(2)}(W_{P'},\phi_L\circ Q, Q,t)$ is not weakly acyclic, by \cite[Theorem 2.12]{BAthesis}.
Let~$v$ be the~$0$-cell of~$W_P$,~$g_1,\dots, g_n$ be its~$1$-cells and~$r_1,\dots,r_{n-1}$ be its 2-cells. Similarly let~$v'$ be the~$0$-cell of~$W_{P'}$,~$g_1,\dots, g_n,x$ be its~$1$-cells and~$r_1',\dots,r_n'$ be its 2-cells. Denote the lifts to the universal covers as in Section \ref{sub:L2TorsionFox} and set~$D_1= \ell^2(G) \widetilde{x}$,~$D_2= \ell^2(G) \widetilde{r}_n'$. A straightforward matrix computation involving Fox calculus now shows that
$$0 \to C_*^{(2)}(W_{P},\phi_L, \id,t) \stackrel{\iota}\to 
C_*^{(2)}(W_{P'},\phi_L\circ Q, Q,t) \stackrel{\rho} \to D_* \to 0$$
is an exact sequence of finite Hilbert~$\NN(G_L)$-chain complexes, where~$\iota_1 (\widetilde{g}_i)=\widetilde{g}_i', \iota_2 (\widetilde{r}_i)=\widetilde{r}_i'$ for~$i=1,\dots, n-1$ and~$\rho_1,\rho_2$ are the obvious projections.
As the boundary operator~$D_2 \rightarrow D_1$ is given by the injective operator~$\Id - t^n R_{g_n}$, the chain complex~$D_*$ is weakly acyclic. Since~$C^{(2)}(W_{P'},\phi_L\circ Q, Q,t)$ is not weakly acyclic, neither is~$C^{(2)}(W_{P},\phi_L, \id,t)$ by Proposition \ref{prop:exact}. This concludes the proof.
\end{proof}

\begin{figure}[h]
\centering
\begin{tikzpicture}

\begin{scope}[scale=0.3]

\braid[
 style all floors={fill=yellow},
 style floors={1}{dashed,fill=yellow!50!green},
 floor command={%
 \draw (\floorsx,\floorsy) -- (\floorex,\floorsy);
 },
 line width=1pt,
 style strands={1}{red},
 style strands={2}{blue},
  style strands={3}{black},
   style strands={4}{purple},
    style strands={5}{brown},
 style strands={6}{green}
] 

(braid) at (2,0) 
s_1^{-1}-s_5  s_2-s_4^{-1} s_3  s_4-s_2^{-1}  s_5^{-1}-s_1
s_5^{-1}-s_1  s_5^{-1}-s_1  s_4-s_2^{-1}  s_3   s_2-s_4^{-1}
 s_1^{-1}-s_5  s_1^{-1}-s_5 
s_1^{-1}-s_5  s_2-s_4^{-1} s_3^{-1}  s_4-s_2^{-1}  s_5^{-1}-s_1
s_5^{-1}-s_1  s_5^{-1}-s_1  s_4-s_2^{-1}  s_3^{-1}   s_2-s_4^{-1}
s_1^{-1}-s_5  s_1^{-1}-s_5 
;

\end{scope}
\end{tikzpicture}
\caption{The braid $\beta \in B_6$. }
\label{fig:braid44}
\end{figure}
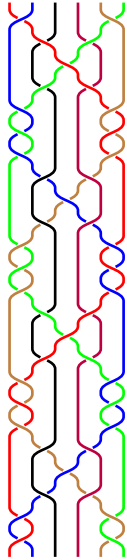

\begin{remark}
If~$L$ is a knot~$K$, then Theorem \ref{thm:main} can be expressed as
$$
\Delta^{(2)}_{K}(t) \cdot \max(1,t)^{n-1} \ \dot{=} \ \det{}^{r}_{\mathcal{N}(G_K)}\left ( \overline{\mathcal{B}}^{(2)}_{t,\gamma_K}(\beta) - \Id^{\oplus (n-1)} \right ),$$
where $\Delta^{(2)}_{K}(t)$ is the~$L^2$-Alexander invariant of $K$ defined by Li-Zhang \cite{LZ06a}.
\end{remark}

\begin{corollary}\label{cor:faithful}
There exist two braids which have the same image under the classical Burau representation but have different images under an~$L^2$-Burau map.
\end{corollary}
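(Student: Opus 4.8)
The plan is to exploit the fact, recalled in the remark just before the statement, that the classical Burau representation $\mathcal{B}$ is \emph{not} faithful for $n \geq 5$: by the work of Long--Paton and Bigelow there exist distinct braids $\alpha, \beta \in B_n$ (with $n \geq 5$) such that $\mathcal{B}(\alpha) = \mathcal{B}(\beta)$. Fix such a pair; the candidate in Figure~\ref{fig:braid44} is a concrete element of $B_6$ lying in the kernel of the Burau representation, so one may in fact take $\alpha = \xi_6$ (the trivial braid) and $\beta$ the braid drawn there, so that $\mathcal{B}(\beta) = \mathcal{B}(\xi_6) = \Id$ while $\beta \neq \xi_6$ in $B_6$. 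The goal is then to find a value $t>0$ and a homomorphism $\gamma \colon F_n \to G$ for which $\mathcal{B}^{(2)}_{t,\gamma}(\beta) \neq \mathcal{B}^{(2)}_{t,\gamma}(\xi_6) = \Id^{\oplus n}$.

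The natural choice of $\gamma$ is the quotient map $\gamma_L \colon F_n \to G_L$ associated to the closure $L = \hat{\beta}$, exactly as set up before Theorem~\ref{thm:main}, so that the reduced $L^2$-Burau map $\overline{\mathcal{B}}^{(2)}_{t,\gamma_L}(\beta)$ becomes computable in terms of the $L^2$-Alexander torsion of $L$. The key step is Theorem~\ref{thm:main}: it gives
$$
\det{}^{r}_{\mathcal{N}(G_L)}\left(\overline{\mathcal{B}}^{(2)}_{t,\gamma_L}(\beta) - \Id^{\oplus(n-1)}\right)
\ \dot{=} \ T^{(2)}_{L,(1,\ldots,1)}(\id)(t) \cdot \max(1,t)^{n}.
$$
If $\mathcal{B}^{(2)}_{t,\gamma_L}(\beta)$ were equal to $\Id^{\oplus n}$, then (by Proposition~\ref{prop:reducedfox}) the reduced map $\overline{\mathcal{B}}^{(2)}_{t,\gamma_L}(\beta)$ would be $\Id^{\oplus(n-1)}$ as well, forcing the left-hand Fuglede--Kadison determinant to vanish; by the displayed formula this would force $T^{(2)}_{L,(1,\ldots,1)}(\id)(t) = 0$ for all $t$, hence by Lemma~\ref{lem:split} the link $L = \hat{\beta}$ would be split. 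So the plan reduces to checking that for the chosen braid $\beta$, the closure $\hat{\beta}$ is \emph{non-split}: once this is known, there is some $t>0$ with $T^{(2)}_{L,(1,\ldots,1)}(\id)(t) \neq 0$, hence $\overline{\mathcal{B}}^{(2)}_{t,\gamma_L}(\beta) \neq \Id^{\oplus(n-1)}$, and a fortiori $\mathcal{B}^{(2)}_{t,\gamma_L}(\beta) \neq \Id^{\oplus n} = \mathcal{B}^{(2)}_{t,\gamma_L}(\xi_6)$, while $\mathcal{B}(\beta) = \mathcal{B}(\xi_6)$.

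The main obstacle is precisely the verification that the chosen kernel element $\beta$ has non-split closure (and that $\beta \neq \xi_n$, but this is immediate from injectivity of the Burau-kernel-detecting invariants used to construct it, or simply from its word). For the braid in Figure~\ref{fig:braid44}, which is built symmetrically from the generators of $B_6$, one argues that $\hat{\beta}$ is non-split by a direct geometric/combinatorial inspection of the diagram — for instance by exhibiting a sublink that is visibly non-split, or by computing a classical invariant (multivariable Alexander polynomial, linking numbers, or the fact that the braid is a non-trivial pure braid whose closure is connected through its permutation structure) that obstructs splitness. Alternatively one may note that $\beta$ lies in the kernel of Burau but is a nontrivial element of the pure braid group, and that pure braids with "linked" strand structure have non-split closures; the cleanest route is to pick the specific $\beta$ so that its closure contains, say, a Hopf link as a sublink, which is manifestly non-split. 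With that single geometric fact in hand, the rest of the corollary follows formally from Theorem~\ref{thm:main}, Lemma~\ref{lem:split}, Proposition~\ref{prop:reducedfox}, and the non-faithfulness of $\mathcal{B}$.
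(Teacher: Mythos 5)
Your overall strategy coincides with the paper's: take the Long--Paton kernel element $\beta \in B_6$, pass to the reduced $L^2$-Burau map via the triangular change of basis of Proposition~\ref{prop:reducedfox}, and combine Theorem~\ref{thm:main} with Lemma~\ref{lem:split} to reduce everything to the single geometric claim that $\hat{\beta}$ is a non-split link. That reduction is carried out correctly. The gap is in how you propose to verify non-splitness, which you yourself flag as ``the main obstacle'' but then dispatch with arguments that do not work. Exhibiting one ``visibly non-split'' sublink (e.g.\ a Hopf link) is \emph{not} sufficient: a split link can perfectly well contain non-split sublinks (a Hopf link together with a distant unknot is split), so a single non-split proper sublink never rules out splitness of the whole $6$-component link. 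What one must show is that \emph{every} component is linked into the rest, and this is exactly why the paper introduces the graph $\Gamma(L)$ whose vertices are the components, with an edge between $L_i$ and $L_j$ whenever some $3$-component sublink $L_i \cup L_j \cup L_k$ is non-split; splitness of $L$ forces $\Gamma(L)$ to be disconnected, so checking that enough $3$-component sublinks are the non-split link $L_{10a140}$ to make $\Gamma(L)$ connected is what closes the argument.

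Your fallback suggestions fare no better for this particular braid. The linking numbers and in fact all $2$-component sublinks of $\hat{\beta}$ are trivial (which is why the paper has to go up to $3$-component sublinks), so linking numbers cannot obstruct splitness here; the one-variable Alexander polynomial of $\hat{\beta}$ is determined by the Burau matrix of $\beta$ and hence agrees with that of the $6$-component unlink, so it vanishes and is useless; the multivariable Alexander polynomial could in principle work (it is governed by Gassner, not Burau), but you do not compute it; and the assertion that ``pure braids with linked strand structure have non-split closures'' is not an argument. So the formal skeleton of your proof is the same as the paper's, but the one genuinely substantive step --- certifying non-splitness of $\hat{\beta}$ --- is missing, and the specific shortcut you call the ``cleanest route'' is logically invalid.
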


\begin{proof}
Long and Paton \cite{LongPaton} proved that the braid~$\beta \in B_6$ depicted in Figure \ref{fig:braid44} has the same image under the classical Burau representation as the trivial braid~$\xi_6 \in B_6$. In order to prove that~$\mathcal{B}^{(2)}_{t,\gamma_{\hat{\beta}}}(\beta) \neq \mathcal{B}^{(2)}_{t,\gamma_{\hat{\beta}}} (\xi_6)$, we will show that~$\overline{\mathcal{B}}^{(2)}_{t,\gamma_{\hat{\beta}}}(\beta) \neq \overline{\mathcal{B}}^{(2)}_{t,\gamma_{\hat{\beta}}} (\xi_6)$: this is enough since the reduced~$L^2$-Burau map is the upper left matricial part of the~$L^2$-Burau map expressed in the basis of the $\widetilde{g}_i$.
We claim that the closure $L$ of $\beta$ is a $6$-component non-split link.
To see this, define $\Gamma(L)$ to be the graph whose vertices are the components $L_i$ of $L$ and such that there is an edge between $L_i$ and $L_j$ when there exists a third component $L_k$ such that $L_i \cup L_j \cup L_k$ is a non-split link. Since $L$ being split implies $\Gamma(L)$ being disconnected, it suffices to show that $\Gamma(L)$ is connected.
One can observe that all sublinks of L with three components are either trivial or the non-split link $L_{10a140}$, and
 there are enough of the second type so that $\Gamma(L)$ is connected.

Consequently, as $L$ is non-split,~$T^{(2)}_{L,(1,\ldots,1)}(t)$ is non-zero for all~$t>0$ (by Lemma \ref{lem:split}) and thus Theorem \ref{thm:main} implies that the operator~$\overline{\mathcal{B}}^{(2)}_{t,\gamma_L}(\beta) - \Id^{\oplus (n-1)}$ has non-zero regular Fuglede-Kadison determinant and is thus injective. The result follows immediately.
\end{proof}

\bibliographystyle{plain}
\bibliography{biblio}

\begin{thebibliography}{10}

\bibitem{aschenbrenner20123}
Matthias Aschenbrenner, Stefan Friedl, and Henry Wilton.
\newblock 3-{M}anifold groups, {EMS} series of lectures in mathematics.
\newblock {\em European Mathematical Society (EMS), Z{\"u}rich}, 2015.

\bibitem{BAthesis}
Fathi Ben~Aribi.
\newblock {\em A study of properties and computation techniques of the
  $L^2$-Alexander invariant in knot theory}.
\newblock PhD thesis, Universit{\'e} Paris Diderot, 2015.

\bibitem{BA13}
Fathi Ben~Aribi.
\newblock The {$L^2$}-{A}lexander invariant detects the unknot.
\newblock {\em Ann. Sc. Norm. Super. Pisa Cl. Sci.}, XV(5):683--708, 2016.

\bibitem{BAstronger}
Fathi Ben~Aribi.
\newblock The {$L^2$}-{A}lexander invariant is stronger than the genus and the
  simplicial volume.
\newblock {\em arXiv preprint arXiv:1606.07003}, 2016.

\bibitem{Bigelow}
Stephen Bigelow.
\newblock The {B}urau representation is not faithful for {$n=5$}.
\newblock {\em Geom. Topol.}, 3:397--404, 1999.

\bibitem{Birmanbook}
Joan~S. Birman.
\newblock {\em Braids, links, and mapping class groups}.
\newblock Princeton University Press, Princeton, N.J.; University of Tokyo
  Press, Tokyo, 1974.
\newblock Annals of Mathematics Studies, No. 82.

\bibitem{Burau}
Werner Burau.
\newblock \"{U}ber {Z}opfgruppen und gleichsinnig verdrillte {V}erkettungen.
\newblock {\em Abh. Math. Sem. Univ. Hamburg}, 11(1):179--186, 1935.

\bibitem{BZ}
Gerhard Burde, Heiner Zieschang, and Michael Heusener.
\newblock {\em Knots}, volume~5 of {\em De Gruyter Studies in Mathematics}.
\newblock De Gruyter, Berlin, extended edition, 2014.

\bibitem{chapman}
T.~A. Chapman.
\newblock Topological invariance of {W}hitehead torsion.
\newblock {\em Amer. J. Math.}, 96:488--497, 1974.

\bibitem{Conway}
Anthony Conway.
\newblock Burau maps and twisted {A}lexander polynomials.
\newblock {\em Proc. Edinb. Math. Soc. (2)}.
\newblock to appear.

\bibitem{DFLdual}
J{\'e}r{\^o}me Dubois, Stefan Friedl, and Wolfgang L{\"u}ck.
\newblock The {$L^2$}-{A}lexander torsion is symmetric.
\newblock {\em Algebr. Geom. Topol.}, 15(6):3599--3612, 2015.

\bibitem{DFL}
J{\'e}r{\^o}me Dubois, Stefan Friedl, and Wolfgang L{\"u}ck.
\newblock The {$L^2$}-{A}lexander torsions of 3-manifolds.
\newblock {\em C. R. Math. Acad. Sci. Paris}, 353(1):69--73, 2015.

\bibitem{Fox}
Ralph~H. Fox.
\newblock Free differential calculus. {II}. {T}he isomorphism problem of
  groups.
\newblock {\em Ann. of Math. (2)}, 59:196--210, 1954.

\bibitem{FL}
Stefan Friedl and Wolfgang L{\"u}ck.
\newblock The {$L^2$}-torsion function and the {T}hurston norm of 3-manifolds.
\newblock {\em arXiv preprint arXiv:1510.00264}, 2015.

\bibitem{Hill02}
J.~A. Hillman.
\newblock {\em Four-manifolds, geometries and knots}, volume~5 of {\em Geometry
  \& Topology Monographs}.
\newblock Geometry \& Topology Publications, Coventry, 2002.

\bibitem{LZ06a}
Weiping Li and Weiping Zhang.
\newblock An {$L^2$}-{A}lexander invariant for knots.
\newblock {\em Commun. Contemp. Math.}, 8(2):167--187, 2006.

\bibitem{LongPaton}
D.~D. Long and M.~Paton.
\newblock The {B}urau representation is not faithful for {$n\geq 6$}.
\newblock {\em Topology}, 32(2):439--447, 1993.

\bibitem{Luc02}
Wolfgang L{\"u}ck.
\newblock {\em {$L^2$}-invariants: theory and applications to geometry and
  {$K$}-theory}, volume~44 of {\em Ergebnisse der Mathematik und ihrer
  Grenzgebiete. 3. Folge. A Series of Modern Surveys in Mathematics [Results in
  Mathematics and Related Areas. 3rd Series. A Series of Modern Surveys in
  Mathematics]}.
\newblock Springer-Verlag, Berlin, 2002.

\bibitem{L}
Wolfgang L{\"u}ck.
\newblock Twisting {$L^2$}-invariants with finite-dimensional representations.
\newblock {\em arXiv preprint arXiv:1510.00057}, 2015.

\end{thebibliography}

\end{document}